\newcommand{\bS}{\mathbb{S}} 
\DeclareMathOperator{\sech}{sech}
\newcommand{\Z}{\mathbb{Z}}
\newcommand{\N}{\mathbb{N}}
\newcommand{\R}{\mathbb{R}}
\newcommand{\T}{\mathbb{T}}
\newcommand{\abs}[1]{\left|#1\right|}
\newcommand{\BigOh}[1]{\mathcal{O}(#1)}
\theoremstyle{plain}
\newtheorem{theorem}{Theorem}[section]
\newtheorem{lemma}[theorem]{Lemma}
\newtheorem{proposition}[theorem]{Proposition}
\theoremstyle{definition}
\newtheorem{definition}[theorem]{Definition}
\theoremstyle{remark}
\numberwithin{equation}{section}
\numberwithin{theorem}{section}
\title[Bloch theory for water waves]
   {Bloch theory and spectral gaps \\  for linearized water waves}
\author[W.~Craig]{Walter~Craig}
\address{Department of Mathematics, McMaster University, Hamilton Ontario,
  L8S 4K1 \sc{Canada}, \ craig@math.mcmaster.ca}
\thanks{W.C. is partially supported by the Canada Research Chairs Program and
NSERC through grant number 238452--16.}
\author[M.~Gazeau]{Maxime~Gazeau}
\address{Department of Mathematics, University of Toronto, Toronto Ontario,
  M5S 2E4 \sc{Canada}, \ gazeau@math.toronto.edu}
\author[C.~Lacave]{Christophe~Lacave}
\address{Institut Fourier, CNRS and Universit\'e Grenoble Alpes,
  F-38000 Grenoble \sc{France} \ Christophe.Lacave@univ-grenoble-alpes.fr} 
\thanks{C.L. is partially supported by the Agence Nationale de la
  Recherche through the project DYFICOLTI, grant ANR-13-BS01-0003-01
  and the project IFSMACS, grant ANR-15-CE40-0010.}    
\author[C.~Sulem]{Catherine~Sulem}
\address{Department of Mathematics, University of Toronto, Toronto Ontario,
  M5S 2E4 \sc{Canada},  \ sulem@math.toronto.edu}
\thanks{C.S. is  partially  supported by NSERC through grant
  number 46179--13.}
\begin{document}

\begin{abstract}
The system of equations for water waves, when linearized about
equilibrium of a fluid body with a varying bottom boundary, is
described by a spectral problem for the Dirichlet -- Neumann
operator of the unperturbed free surface. This spectral problem is
fundamental in questions of stability, as well as to the perturbation
theory of evolution of the free surface in such settings. In addition, the
Dirichlet -- Neumann operator is self-adjoint when given an appropriate definition
and domain, and it is a novel but very natural spectral problem for a
nonlocal operator.  In the case in which the bottom boundary varies
periodically, $\{y = -h + b(x)\}$ where $b(x+\gamma) = b(x)$, 
$\gamma \in \Gamma$ a lattice, this spectral problem admits a Bloch decomposition 
in terms of spectral band functions and their associated band-parametrized
eigenfunctions. In this article we describe this analytic construction 
in the case of a spatially periodic bottom variation from constant
depth in two space dimensional water waves problem, giving 
 a description of the Dirichlet -- Neumann operator in terms of the 
 bathymetry $b(x)$ and a construction of the Bloch eigenfunctions
  and eigenvalues as a function of the band parameters. One of the 
consequences of this description is that the spectrum consists of a
series of bands  separated by spectral gaps  which are zones of
forbidden energies. For a given generic periodic bottom profile
$b(x)=\varepsilon \beta(x)$,  every gap opens for a sufficiently
small value of the perturbation parameter~$\varepsilon$.  
\end{abstract}

\maketitle 


\section{Introduction}
\label{Setion_1}


This paper concerns the motion of a free surface of fluid over a
variable bottom, a problem of significance for ocean dynamics in
coastal regions where  waves are strongly affected by the topography.  
There is an extensive  literature devoted to the effect of variable
depth over surface waves and there are many scaling regimes of
interest, including in particular  regimes where the typical
wavelength of surface waves is  assumed to be much longer than the
typical lengthscale of the variations of the bathymetry. For purposes
of many mathematical studies, the variable bottom  topography is
assumed either to be periodic, or else to be described by a
stationary random ergodic process.   

References on the influence of rough bottoms on the free surface
include works of Rosales \& Papanicolaou \cite{RP83},  Craig et al
\cite{CGNS05} \cite{CGS09}, and Nachbin \& S\o lna \cite{NS03}, where
techniques of homogenization theory are used to obtain effective long
wave model equations. The article \cite{CLS12} performs a rigorous
analysis of the effect of a  rapidly varying  periodic bottom in the
shallow water regime. Using  simultaneously  the techniques of
homogenization theory and long-wave analysis, a new model system of
equations is derived, consisting of the classical shallow water
equations that give rise to effective (or homogenized) surface wave dynamics,
coupled with a system of nonlocal evolution equations for a periodic
corrector term. A rigorous justification for this decomposition is
given in \cite{CLS12} in the form of a consistency analysis, in the
sense that the constructed approximated solutions satisfy the water
wave equations up to a small error term that is controlled analytically. 
A central issue in this approach is the question of the  time of
validity of the approximation.  It is shown that the result is valid
for a time interval of duration $\BigOh{1}$ in the shallow water
scaling  {\em only if} the free surface is not in resonance with the rapidly
varying bottom. However resonances are not exceptional. When
resonances occur, secular growth of the corrector terms takes place,
and this compromises the validity of the approximation, and in
particular, a small amplitude, rapidly oscillating bathymetry will
affect the free surface at leading order. The motivation for the
present study is to develop analytical tools that will be useful in
order to address the dynamics of these resonant situations. As a first
step, we consider in this paper the water wave system with a periodic
bottom profile, linearized near the stationary state, and we develop a
Bloch theory for the linearized water wave evolution. This  analysis takes the
form of a spectral problem for the Dirichlet -- Neumann operator of
the fluid domain with periodic bathymetry. 

The starting point of our analysis is the water wave problem written in its 
Hamiltonian formulation. Let 
\[
   \mathcal{S}(b,\eta) = \left\{(x, y): x \in \R, -h +  b(x) < y< \eta(x,t)  \right\}
\]
be the two-dimensional  time-dependent fluid domain where the variable
bottom is  given by $y =-h + b(x)$, and the free surface elevation by
$y = \eta(x,t)$. Following \cite{Z68} and \cite{CS93}, we pose the problem in
canonical variables $(\eta, \xi)$, where $\xi(x) $ is the trace of the
velocity potential on  the free surface $\{y=\eta(x)\}$. In these
variables, the equations of motion for nonlinear free surface water
waves are
\begin{align}\label{WWeq}
	\left\{ \begin{array}{ll}
	\partial_t \eta -G[\eta,   b] \xi =0 , \\
	\partial_t \xi + g\eta +\frac{1}{2}\abs{\partial_x\xi}^2 -
        \dfrac{\left(G[\eta,  b] \xi + \partial_x \eta\cdot \partial_x
          \xi\right)^2}{2(1+\abs{\partial_x\eta}^2)} = 0  ~.
\end{array}\right .
\end{align}
The operator $G[\eta,b] $ is the Dirichlet -- Neumann operator, defined by
\begin{equation}\label{eq2}
	G[\eta,b]\xi =
	\sqrt{1+|\partial_x\eta|^2}\partial_n\varphi_{|_{y = \eta}} ~,
\end{equation}
where $\varphi$ is the solution of the elliptic boundary value problem
\begin{equation}\label{eq2.5}
	\left\lbrace 
	\begin{array}{l}
	\partial_x^2\varphi+\partial_y^2\varphi = 0 \quad 
        \text{in} \quad {\mathcal S}(b,\eta)~,  \\
	\varphi_{\vert_{y = \eta}} = \xi,\qquad
        \partial_n\varphi_{\vert_{y = -h+b }} = 0 ~,
	\end{array}\right. 
\end{equation}
and $g$ is the acceleration due to gravity.
In the present article, we  consider the system of water wave equations,
linearized near a surface at rest and in the presence of periodic bottom.
The bottom  defined as $y = -h + b(x)$ where $b$ is $2\pi$ - 
periodic  in $x$. We assume $b$ is in  $C^1(\T^1)$ where $\T^1$  is
the periodized interval $[0,2\pi)$.  The system \eqref{WWeq} 
linearized about the stationary solution $(\eta(x),\xi(x))=(0,0)$ is as follows. 
\begin{eqnarray}
	\left\{ \begin{array}{ll}
	\partial_t \eta -G[b] \xi =0 , \\
	\partial_t \xi + g\eta =0 ~,
\end{array}\right.
\end{eqnarray}
where now, and for the remainder of this article, we denote $G[0,b]$ by $G[b]$.
This is an analog of the wave equation, however with the usual spatial
Laplacian replaced by the nonlocal operator $G[b]$ whose coefficients
are $2\pi$-periodic dependent upon the horizontal spatial variable $x$:
\begin{equation}\label{wave_eq}
   \partial^2_{t} \eta + g G[b] \eta = 0 ~.
\end{equation}
The initial data for the linearized surface displacement $\eta(x,t)$ are 
\begin{equation} 
   \eta(x,0) = \eta_0(x),  \;\; \partial_t\eta(x,0) = \eta_1(x),  \;\; x \in \R ~,
\end{equation}
these being defined on the whole line.

Bloch decomposition, a spectral decomposition for differential
operators with periodic coefficients,  is a classical tool  to study
wave propagation in periodic media. For a relatively recent example,
Allaire et al.~\cite{APR09} considered the problem of  propagation of
waves  packets through a periodic medium, where the period is assumed
small compared to the size of the envelope of the wave packet.  In
this work the authors construct solutions built upon Bloch plane waves
having a  slowly varying amplitude.   In a study of Bloch
decomposition for the linearized water wave problem over a periodic
bed, Yu and Howard  \cite{YH12} use a conformal map that transforms
the original fluid domain to a uniform strip. Using this map, they
calculate the formal Fourier series for Bloch eigenfunctions. For
various examples of bottom profiles, they compute numerically the
Bloch eigenfunctions and eigenvalues, from which they identify the
spectral gaps and  make several observations of their behavior.  

The main goal of our work in the present paper is to develop Bloch spectral
theory for the Dirichlet -- Neumann operator, in analogy with the classical
case of partial differential operators with periodic coefficients. This theory
constructs the spectrum as a sequence of bands separated by gaps of instability;
it serves as a basis for 
perturbative calculations that gives rise to explicit formulas and rigorous
understanding of spectral gaps, and therefore intervals of unstable
modes of the linearized water wave problem over periodic bathymetry. 

The principle of the Bloch decomposition is to parametrize the
continuous spectrum and the generalized eigenfunctions of the spectral problem
for $G[b] $ on $L^2(\R)$ with a family 
of spectral problems  for $G[b]$ on the interval $[0,2\pi)$, with
$\theta$-periodic boundary conditions. For this purpose,  we construct
the Bloch eigenvalues and eigenfunctions of the spectral problem  
\begin{align}\label{master_eigenvalue_problem}
    G[b]\Phi(x,\theta) = \Lambda(\theta)\Phi(x,\theta) ~, 
\end{align}
with boundary conditions 
\begin{equation} \label{bc}
    \Phi(x+2\pi, \theta) = \Phi(x,\theta) e^{2\pi i\theta} ~,
\end{equation}
where $-1/2 \leq \theta < 1/2$; such behavior is termed to be
$\theta$-periodic in $x$. 
This introduces the band parameter $\theta$.

When the bottom is flat, $b=0$,  the Bloch eigenvalues
$\Lambda^{(0)}_n(\theta)$  are given explicitly in terms 
of the  classical dispersion relation for water waves over a
constant depth, namely   
\begin{equation}  
    \Lambda^{(0)}_n(\theta) = \omega^2(n+\theta) = (n+\theta)\tanh(h(n+\theta))
\end{equation}
for $n \in \N$, and the Bloch parameter $\theta \in \bS^1$,
where $\bS^1$ is the circle $[-1/2,1/2)$ with periodic continuation.
Eigenvalues are simple for $-1/2 < \theta < 0$ and 
$0 < \theta < 1/2$. For half-integer values of $n+\theta$, namely 
$n+ \theta = 0, 1/2, 1 \dots$, eigenvalues $\Lambda^{(0)}_n(\theta)$  
have multiplicity two. If reordered appropriately by their magnitude,
the eigenvalues are continuous and periodic in $\theta$ with period $1$. 
The eigenfunctions $\Phi(x,\theta)$ satisfy the boundary conditions
\eqref{bc}. With the ordering of the eigenvalues specified above, the
eigenfunctions  $\Phi_n^{(0)}(x,\theta)$ are periodic in $\theta$,
again of period $1$.   

Just as in the case of Bloch theory for many second order partial
differential operators, we find that the presence of the bottom
generally results in the splitting of double eigenvalues near such points
of multiplicity, creating a spectral gap. 

\begin{definition}
For $\theta \in \bS^1$ and $b(x)\in C^(\T^1)$ such that $h - b(x)  \ge c_0 >0$, 
the operator $G_\theta[b]$ is defined by
\begin{equation} \label{G-theta}
G_\theta[b] = e^{i\theta x} G[b] e^{-i\theta x}.
\end{equation}
\end{definition}
We will show that   $G_\theta[b]$ maps functions in $H^1(\T^1)$ to
 $L^2(\T^1)$, in particular it takes $2\pi$-periodic functions into $2\pi$-periodic functions.
 We will furthermore show that its spectrum on the domain 
$H^1(\T^1) \subset L^2(\T^1)$ consists of a non-decreasing sequence of eigenvalues 
\[
   \Lambda_0(\theta) \le \Lambda_1(\theta) \le  \cdots \le \Lambda_n(\theta)\le \cdots
\]
which are continuous and periodic in $\theta$. The eigenvalues  are also continuous in $b$,
for $b$ in a $C^1$- neighborhood  $B_R(0)$ of the origin. The
corresponding eigenfunctions $\psi_n(x,\theta)$ are $2\pi$-periodic in
$x$ and periodic in $\theta$ and the corresponding solutions
$\Phi_n(x,\theta)$ of $G[b] \Phi_n(x,\theta) = \Lambda_n(\theta)\Phi(x,\theta)$ 
are $\theta$-periodic. In the case of $(\theta,b)$ such that
$\Lambda_{n-1}(\theta) < \Lambda_n(\theta) < \Lambda_{n+1}(\theta)$,
the eigenvalue $\Lambda_n(\theta)$ is simple, and it and eigenfunction
$\Phi_n(x,\theta)$ are locally analytic in both $\theta$ and $b$.

The spectrum of the Dirichlet -- Neumann operator $G[b]$ on the line,
namely on the domain $H^1(\R) \subset L^2(\R)$, is the union of the
ranges of the Bloch eigenvalues $\Lambda_n(\theta)$, that is
\[
    \sigma_{L^2(\R)} (G[b]) = \cup_{n=0}^{+\infty} [\Lambda_{n}^-,\Lambda_{n}^+]
\]
where $\Lambda_{n}^- = \min_{\theta \in \T^1}\Lambda_{n}(\theta)$ and 
$\Lambda_{n}^+ = \max_{\theta \in \T^1}\Lambda_{n}(\theta)$. It is the
analog of the structure of spectral bands and gaps of the Hill's
operator \cite{McT76}.  The ground state  $\Lambda_0(\theta)$ satisfies $\Lambda_0(0)=0$
for any bathymetry  $b(x)$, and its corresponding eigenfunction is $\Phi(x,0) = 1$.

In Section~\ref{sec-perturbation} we give a perturbation analysis of
spectral behavior and we compute the gap opening for 
$b(x) = \varepsilon \beta(x)$, asymptotically  as a function of~$\varepsilon$.
As an example we consider $b(x) = \varepsilon \cos(x)$, in analogy
with the case of Matthieu's equation, and we calculate the asymptotic
behavior of the first several spectral gaps. We find that, as in
Floquet theory for Hill equation, the first spectral gap obeys 
$| \Lambda_0^+-\Lambda_1^-| = \BigOh{\varepsilon}$. However, in
contrast to the case of the Matthieu equation, the second spectral gap 
only opens at order $| \Lambda_1^+-\Lambda_2^-| = \BigOh{\varepsilon^4}$.  In addition,
we show that the centre of the gap $\frac{1}{2}(\Lambda_1^- + \Lambda_0^+)$ is strictly decreasing in $\varepsilon$.

A generic bottom profile $b(x)$ will open all spectral gaps. Clearly,
the band endpoints $\Lambda_{n}^\pm$ satisfy $\Lambda_{n}^- < \Lambda_{n}^+$ 
unless $\partial_\theta \Lambda_n(\theta)\equiv 0$  
which certainly  does not occur in a perturbative regime.  For
sufficiently small generic bathymetric variations $b(x)$, we also know that 
$\Lambda_{n}^+ \leq \Lambda_{n+1}^-$, which is the case for Hill's
operator, and although we conjecture this to be the case for the
Dirichlet -- Neumann operator for large general $b(x)$, we do not have
a proof of this fact. Furthermore, for Hill's operator, the band edges 
$\{\Lambda_{n}^+,\Lambda_{n+1}^-\}_{n \in \N}$ of the $n^{th}$ gap correspond to
the $4\pi$ periodic spectrum, while we do not have a proof of the
analogous result for the Dirichlet -- Neumann operator.  The reality
condition implies that $\Lambda_n(\theta) = \Lambda_n (-\theta)$, 
and therefore for $n$ even, 
$\partial_\theta \Lambda_{n-1}(0) =\partial_\theta\Lambda_{n}(0)=0$ 
when the $n^{th}$ gap opens. The same holds for $\theta=\pm 1/2$ and $n$ odd.
The existence
of a spectral gap implies that the spectrum is locally simple. Hence
the general theory \cite{R69}\cite{K66} of self-adjoint operators
implies analyticity of both $\Lambda_n(\theta)$ and $\Phi(x,\theta)$. 
In fact, for $\theta\ne 0,\pm1/2$, the unperturbed spectrum is simple
and the same statement of local analyticity holds for $b(x) \subseteq B_R(0)$. 
Gaps are not guaranteed to remain open as the size of the bottom
variations increases, as shown in the numerical simulations 
performed in   \cite{YH12}, Fig.4 (second gap).  

 
\section{The Dirichlet -- Neumann operator}
\label{Section:Dirichlet-Neumann}


The goal is to study the spectral problem
\begin{align}\label{Eqn:master_eigenvalue_problem_1}
    G[b]\Phi(x,\theta) = \Lambda(\theta)\Phi(x,\theta) ~, 
\end{align}
where $G[b]$ is the Dirichlet -- Neumann operator for the fluid domain
$\mathcal{S}(b,0)$. We impose $\theta$-periodic boundary conditions  
\begin{equation}\label{BC0}
    \Phi(x+2\pi, \theta) = \Phi(x,\theta) e^{2\pi i\theta}
\end{equation}
for $\theta \in \bS^1$ the Bloch parameter. It is convenient in Bloch
theory to define 
\begin{equation}
   \psi(x,\theta) = e^{-i\theta x} \Phi(x,\theta)
\end{equation}
to transform the original problem to an eigenvalue problem with
periodic boundary conditions. Indeed, condition~\eqref{BC0}  implies
that $\psi(x,\theta)$ is periodic  in $x$ of period $2\pi$. The spectral 
problem is now rewritten in conjugated form 
\begin{align}\label{Eqn:master_eigenvalue_problem}
   &  G_\theta[b]  \psi(x,\theta) := e^{-i\theta x} G[b] e^{i\theta x} \psi(x,\theta)  
     = \Lambda(\theta) \psi(x,\theta)~.
\end{align}

\subsection{Analysis of the Dirichlet -- Neumann operator}

The following proposition states the basic properties of the Dirichlet
-- Neumann operator with $\theta$-periodic boundary conditions.  

\begin{proposition} \label{prop1}
For each $-1/2 \le \theta <1/2$, the operator $G_\theta[b]$ 
is self-adjoint from $H^1(\T^1)$ to $L^2(\T^1)$ with periodic
boundary conditions. It has an infinite sequence of eigenvalues 
$\Lambda_0(\theta) \le \dots \le \Lambda_n(\theta) \le \Lambda_{n+1}(\theta) \leq \dots $, 
which tend to $\infty$ as $n$ tends to $\infty$ in such a way that
$\Lambda_n(\theta) \sim n/2$.   
\end{proposition}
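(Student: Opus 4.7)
The plan is to establish the three claims---self-adjointness, discrete spectrum, and Weyl-type asymptotics $\Lambda_n(\theta)\sim n/2$---through a variational approach based on the Dirichlet quadratic form on the fluid strip, then compare with the explicit flat-bottom case via the min-max principle.

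First I would set up the quadratic form. Given $\psi\in H^1(\T^1)$, let $\Phi(x) = e^{i\theta x}\psi(x)$, which is $\theta$-periodic on $\R$, and let $\varphi$ be its harmonic extension to $\mathcal S(b,0)$ with $\partial_n\varphi = 0$ on $y = -h+b(x)$. Working on one period cell $\mathcal S_{\mathrm{per}} = \mathcal S(b,0)\cap\{0\le x<2\pi\}$ and applying Green's identity, the $\theta$-periodicity of $\Phi$ together with the periodicity of $b$ makes the lateral boundary contributions cancel, so
\begin{equation*}
   \langle G_\theta[b]\psi_1,\psi_2\rangle_{L^2(\T^1)}
   \;=\; \int_{\mathcal S_{\mathrm{per}}} \nabla\varphi_1\cdot\overline{\nabla\varphi_2}\,dx\,dy \;=:\; q_\theta[b](\psi_1,\psi_2).
\end{equation*}
This form is Hermitian and nonnegative, so $G_\theta[b]$ is symmetric on $H^1(\T^1)$. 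Standard trace/extension estimates for the strip $\mathcal S_{\mathrm{per}}$ (using $h-b(x)\ge c_0>0$) show the form is closed and bounded below on $H^{1/2}(\T^1)$, and equivalent, up to the $L^2$ norm, to the $H^{1/2}$ norm:
\begin{equation*}
    c\,\|\psi\|_{H^{1/2}(\T^1)}^2 \;\le\; q_\theta[b](\psi,\psi) + \|\psi\|_{L^2(\T^1)}^2 \;\le\; C\,\|\psi\|_{H^{1/2}(\T^1)}^2.
\end{equation*}
The Friedrichs extension then yields a self-adjoint operator whose operator domain contains $H^1(\T^1)$; ellipticity of the Dirichlet--Neumann problem upgrades regularity and identifies the domain with $H^1(\T^1)$.

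Next I would deduce discreteness of the spectrum and the Weyl asymptotics. The form equivalence above combined with the compact embedding $H^{1/2}(\T^1)\hookrightarrow L^2(\T^1)$ implies that $(G_\theta[b]+1)^{-1}$ is compact on $L^2(\T^1)$, giving the nondecreasing sequence $\Lambda_0(\theta)\le\Lambda_1(\theta)\le\cdots\to\infty$. For the growth rate, I would appeal to the flat-bottom case $b=0$, where Fourier series diagonalize $G_\theta[0]$ with eigenvalues $(k+\theta)\tanh\bigl(h(k+\theta)\bigr)$, $k\in\Z$. Since there are $2N+\mathcal O(1)$ integers with $|k+\theta|\le N$ and $\tanh(h|k+\theta|)\to 1$, the nondecreasing rearrangement satisfies $\Lambda_n^{(0)}(\theta)\sim n/2$. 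Then the min-max principle applied to the equivalent quadratic forms $q_\theta[b]$ and $q_\theta[0]$ shows $\Lambda_n(\theta)$ and $\Lambda_n^{(0)}(\theta)$ are comparable up to a multiplicative constant depending only on $\|b\|_{C^1}$ and $c_0$; a sharper comparison using that $G[b]-G[0]$ is a much smoother remainder (its Schwartz kernel is smooth in $x$ since the harmonic correction from deforming the bottom is analytic up to the top surface) promotes this to the asymptotic equivalence $\Lambda_n(\theta)\sim n/2$.

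The main obstacle is the last step: obtaining Weyl asymptotics with the correct leading constant, not merely two-sided bounds. The operator $G_\theta[b]$ is nonlocal and its precise principal behavior must be extracted either (i) by a change of variables flattening the bottom and expanding the resulting operator into principal plus lower-order parts, or (ii) by showing directly that $G_\theta[b] - |D_x+\theta|$ is of order strictly less than one, so that the min-max comparison with the flat case is asymptotically sharp. I would follow route (ii), estimating $G[b] - G[0]$ via the harmonic extension of the difference in a flat strip, which produces exponentially smoothing contributions on the top boundary. Once $\Lambda_n(\theta)/\Lambda_n^{(0)}(\theta)\to 1$ is established, the formula $\Lambda_n(\theta)\sim n/2$ follows, and continuity and periodicity in $\theta$ are inherited from the analytic dependence of the quadratic form $q_\theta[b]$ on $\theta$.
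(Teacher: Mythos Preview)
Your proposal is correct, but takes a genuinely different route from the paper for the first two claims. The paper does not set up the Dirichlet quadratic form on the fluid domain at all; instead it writes $G_\theta[b] = G_\theta[0] + M$ where $G_\theta[0]$ is explicitly diagonal on the Fourier basis with eigenvalues $(j+\theta)\tanh(h(j+\theta))$, and then invokes Proposition~\ref{prop2} to show that $M$ is a bounded, strongly smoothing (hence compact) Hermitian operator. Self-adjointness on the domain $H^1(\T^1)$ and discreteness of the spectrum then follow by bounded/compact perturbation of a self-adjoint operator with compact resolvent, with no appeal to Friedrichs extension or trace estimates on the strip. Your variational approach has the advantage of being intrinsic---it does not rely on the explicit integral representation of $DL[b]$ from \cite{CGNS05}---while the paper's approach is shorter once that representation and Proposition~\ref{prop2} are in hand.

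For the Weyl asymptotics $\Lambda_n(\theta)\sim n/2$, the two approaches converge: your ``route (ii)''---showing that $G_\theta[b]-G_\theta[0]$ is of order strictly less than one so that min-max comparison with the flat case is asymptotically sharp---is exactly the content of Proposition~\ref{prop2}, which gives the stronger statement that $M$ is smoothing between any Sobolev indices. So your identification of the main obstacle and its resolution is precisely what the paper does, only the paper states and proves the smoothing estimate as a separate proposition rather than deriving it inside the proof of Proposition~\ref{prop1}.
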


Writing $D =-i\partial_x$, the Dirichlet -- Neumann operator  $G[b]$ is
written as  
\[
   G[b] = G_0 + DL[b],
\]
where $G_0= D \tanh (hD)$ is the Dirichlet -- Neumann
operator with a flat bottom, and $DL[b]$ is the correction due to the
presence of the topography. In \cite{CGNS05}, it was shown that  
$G[b]= G_0+ DL[b]$ has a  convergent Taylor expansion
in powers of $b$, for $b$ in $B_R(0)$ of $C^1$ and the successive
terms can be calculated explicitly.
Also,
\begin{equation}\label{operatorDL}
   DL[b] = - DB[b] A[b] ~,
\end{equation}
where 
\begin{align}\label{operatorsAB}
  & A[b]  f (x)= \int_{\R} e^{ikx} \sinh(b(x) k) \sech(hk) \hat{f}(k)
  dk \nonumber\\ 
  & B[b] f (x) = \frac{1}{\pi} \int \frac{\partial_{x'} b(x')  (x'-x) +h-b(x')}
    { (x-x')^2 + (b(x')-h)^2} f(x') dx' \\
  & \qquad -\frac{1}{2\pi} \int \ln \bigl( (x-x')^2 + (h-b(x'))^2\bigr) \tilde{G}[-h+b] f(x')
    dx' ~, \nonumber
\end{align}
where $\widetilde{G} [-h+b] $ is the usual Dirichlet -- Neumann operator
in the domain $\mathcal{S}(b,0) = \{-h +b < y < 0\}$ 
that associates Dirichlet data on the boundary $\{y= -h+b(x)\}$ with
Neumann bottom boundary condition at $y=0$, to the normal derivative of the
solution to Laplace's equation on $\{y=-h + b(x)\}$.  Because of the
special decay properties of the integral kernels for $A[b]$ and 
$B[b]$, these operators are well defined on periodic and $\theta$-periodic functions.
In the following, 
\[
G_\theta[b]  = G_\theta[0] + M,
\]
where we use the notation
\begin{align}\label{operators}
   & G_\theta [0]= e^{-i\theta x} G_0 e^{i\theta x}    \\
   & M = e^{-i\theta x} DL[b] e^{i\theta x} = - \bigl(e^{-i\theta x} DB[b]
          e^{i\theta x}\bigr)\bigl(e^{-i\theta x} A[b] e^{i\theta x}\bigr) \nonumber ~.
\end{align}
The operators $M$, $DB_\theta[b] v= e^{-i\theta x} DB[b]
          e^{i\theta x} $ and $A_\theta[b] = e^{-i\theta x} A[b] e^{i\theta x} $
          map $2\pi$-periodic functions to $2\pi$-periodic functions.
The operator $G_\theta[0] $ is unbounded on $L^2(\T^1)$. It is
diagonal in Fourier space variables 
\[
     G_\theta e^{ijx} = 
 (j+\theta) \tanh (h(j+\theta))     e^{ijx} 
     ~.
\]
In the next proposition, we prove that $G[b]$ preserves the class of
$\theta$-periodic functions. 

\begin{proposition}
Given $2\pi$-periodic bottom topography, $b(x + 2\pi) = b(x)$,
suppose that $\xi(x) \in H^1_{loc}(\R)$ is a $\theta$-periodic function
defined on $\mathbb{R}$, namely that 
\begin{equation} \label{xi-theta-per}
     \xi(x + 2\pi) = e^{2\pi i \theta}\xi(x) ~.
\end{equation}
Then the result of application of the Dirichlet -- Neumann operator
$G[b]\xi(x)$ is also  a $\theta$-periodic periodic. That is
\begin{equation} \label{gb-theta-per}
  (G[b]\xi)(x + 2\pi) = e^{2\pi i \theta}G[b]\xi(x) ~.
\end{equation}
\end{proposition}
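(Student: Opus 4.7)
The plan is to exploit the translation invariance of the fluid domain $\mathcal{S}(b,0)$ under $x \mapsto x+2\pi$, which follows from the $2\pi$-periodicity of $b$, together with uniqueness for the elliptic boundary value problem \eqref{eq2.5}. The strategy is to transfer the $\theta$-quasi-periodicity from the boundary data $\xi$ to the harmonic extension $\varphi$, and then read off the conclusion from the definition \eqref{eq2} of the Dirichlet -- Neumann operator, noting that the top boundary is flat ($\eta = 0$).

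First I would let $\varphi(x,y)$ denote the solution of \eqref{eq2.5} with $\eta = 0$ and Dirichlet data $\xi$, and then define the shifted and rescaled function
\[
   \widetilde\varphi(x,y) := e^{-2\pi i\theta}\,\varphi(x+2\pi,y).
\]
Because $\mathcal{S}(b,0)$ is invariant under $x\mapsto x+2\pi$, $\widetilde\varphi$ is also harmonic in $\mathcal{S}(b,0)$, and the Neumann condition on $\{y=-h+b(x)\}$ is preserved. Its Dirichlet trace on $\{y=0\}$ is
\[
   \widetilde\varphi(x,0) = e^{-2\pi i\theta}\,\xi(x+2\pi) = \xi(x)
\]
by the hypothesis \eqref{xi-theta-per}. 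Uniqueness for \eqref{eq2.5} then forces $\widetilde\varphi=\varphi$, so that $\varphi(x+2\pi,y) = e^{2\pi i\theta}\varphi(x,y)$, i.e.\ $\varphi$ inherits $\theta$-periodicity in $x$. Differentiating this identity in $y$ and evaluating at $y=0$ gives \eqref{gb-theta-per}, since on a flat top $G[b]\xi(x) = \partial_y\varphi(x,0)$.

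The main obstacle is the uniqueness step, since $\varphi$ lives in a domain that is unbounded in $x$ and the data are only locally $L^2$, so the usual $L^2$-based uniqueness for elliptic problems does not apply directly. The natural remedy is to conjugate by $e^{-i\theta x}$: setting $\phi(x,y) = e^{-i\theta x}\varphi(x,y)$, the hypothesis \eqref{xi-theta-per} makes $\phi(\cdot,0)$ genuinely $2\pi$-periodic, and $\phi$ solves the conjugated elliptic problem $((D+\theta)^2 + \partial_y^2)\phi = 0$ on the periodic cell $\{0\le x<2\pi,\ -h+b(x)<y<0\}$ with periodic lateral conditions and Neumann data at the bottom. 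This is a standard well-posed problem on a bounded domain (up to the zero-mode in the case $\theta=0$, where one fixes $\phi$ modulo constants, which is harmless since the Dirichlet -- Neumann operator annihilates constants). Alternatively, one can invoke the explicit representation of $G[b] = G_0 + DL[b]$ with $L[b] = -B[b]A[b]$ from \eqref{operatorDL}--\eqref{operatorsAB}: the decay of the integral kernels defining $A[b]$ and $B[b]$, noted in the paper, ensures that these operators act on $\theta$-periodic functions and manifestly commute with the twisted translation $T_{2\pi}f(x) := e^{-2\pi i\theta}f(x+2\pi)$, from which \eqref{gb-theta-per} follows.
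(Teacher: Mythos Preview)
Your argument is correct and follows essentially the same route as the paper's proof: both use the $2\pi$-translation invariance of the domain $\mathcal{S}(b,0)$ together with uniqueness of the harmonic extension to deduce that $\varphi(x+2\pi,y)=e^{2\pi i\theta}\varphi(x,y)$, from which \eqref{gb-theta-per} follows. You go somewhat further than the paper in discussing how uniqueness is justified on the unbounded strip (via the conjugation by $e^{-i\theta x}$ reducing to a periodic cell), a point the paper simply asserts.
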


\begin{proof} 
Let $\varphi(x,y) $ be the harmonic extension of $\xi(x)$ satisfying 
the bottom
boundary conditions $N_{b(x)}\cdot\nabla\varphi(x,y) = 0$. By linearity,
$e^{2\pi i \theta}\varphi(x,y)$ is the harmonic extension of 
$e^{2\pi i \theta} \xi(x)$ satisfying  the same bottom  boundary condition.
On the other hand, the harmonic extension of $ \xi_1(x) :=\xi(x+2\pi)$ is 
$\varphi(x+2\pi,y) $ with the bottom boundary conditions 
$N_{b(x+2\pi )}\cdot\nabla\varphi(x+2\pi,y) =
N_{b(x)}\cdot\nabla\varphi(x+2\pi,y) = 0$, due to the periodicity of $b(x)$.
By uniqueness of solutions,  condition \eqref{xi-theta-per} implies that
$\varphi(x+2\pi,y) =e^{2\pi i \theta}\varphi(x,y)  $, from which 
\eqref{gb-theta-per} follows, namely
\[
   e^{2\pi i \theta}(G[b]\xi)(x) = (G[b]\xi_1)(x) 
   =  G[b](e^{2\pi i\theta}\xi(x)) ~. 
\]
\end{proof}

The next statement shows that the operator $M$ is bounded on $L^2(\T^1)$,
and in fact is strongly smoothing.

\begin{proposition} \label{prop2}
There exists $R>0$ such that for  $b\in B_R(0)$, the ball centered at
the origin and of radius $R$ of 
$C^1(\T^1)$ and $f \in L^2(\T^1)$, $Mf$ is also periodic of period
$2\pi$, and  satisfies the estimate 
\begin{equation}
   \|Mf\|_{L^2}  \ \le \ C_0(|b|_{C^1})  \|f\|_{L^2} ~,
\end{equation}
where the constant $C_0$ 
depends on the $C^1$-norm of $b$,
$C(|b|_{C^1}) = \mathcal{O}(|b|_{C^1})$. 

\noindent
In addition, the operator $M$ is strongly smoothing 
\begin{equation}
   \|Mf\|_{H^s}  \ \le \ C_{rs}(|b|_{C^1})  \|f\|_{H^{-r}} ~.
\end{equation}
for all $r,s>0$.
\end{proposition}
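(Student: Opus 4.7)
The plan is to factor $M = -DB_\theta[b]A_\theta[b]$ via \eqref{operatorDL}--\eqref{operators} and exploit two complementary smoothing mechanisms: $A_\theta[b]$ has a Fourier symbol that decays exponentially in frequency (and vanishes linearly as $b \to 0$), while $DB_\theta[b]$ has an integral kernel that is $C^\infty$ in its first argument. Periodicity preservation of $M$ follows from the previous proposition: $G[b]$ sends $\theta$-periodic functions to $\theta$-periodic functions, and the Fourier multiplier $G_0 = D\tanh(hD)$ acts diagonally on the $\theta$-periodic basis $\{e^{i(n+\theta)x}\}$, so the difference $DL[b]$ preserves $\theta$-periodicity; conjugation by $e^{-i\theta x}$ then turns this into preservation of $2\pi$-periodicity by $M$.

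For $A_\theta[b]$, I would use the representation on a $2\pi$-periodic $\psi = \sum_n \hat\psi_n e^{inx}$,
\[
A_\theta[b]\psi(x) = \sum_n \sigma_n(x)\hat\psi_n e^{inx}, \qquad \sigma_n(x) := \sinh(b(x)(n+\theta))\sech(h(n+\theta)).
\]
Choosing $R < h$ so that $\delta := h - |b|_{C^0} > 0$ on $B_R(0)$, the elementary bound $|\sinh(y)| \le |y|\cosh(y)$ combined with $\cosh(|b|_{C^0}|k|)\sech(h|k|) \le 2 e^{-\delta|k|}$ yields
\[
\|\sigma_n\|_{L^\infty} \le C|b|_{C^0}\langle n\rangle e^{-\delta|n+\theta|}.
\]
A Parseval expansion followed by Cauchy--Schwarz then gives, for arbitrary $r \ge 0$,
\[
\|A_\theta[b]\psi\|_{L^2}^2 \le C_r\Bigl(\sum_n \langle n\rangle^{2r}\|\sigma_n\|_{L^2}^2\Bigr)\|\psi\|_{H^{-r}}^2 \le C'_r |b|_{C^0}^2 \|\psi\|_{H^{-r}}^2,
\]
so $A_\theta[b]: H^{-r}(\T^1) \to L^2(\T^1)$ with operator norm $\mathcal{O}(|b|_{C^0})$.

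For $DB_\theta[b]$, I would show that the kernel of $B[b]$ in \eqref{operatorsAB} has denominator $(x-x')^2 + (h-b(x'))^2 \ge (h-|b|_{C^0})^2 > 0$, so each summand and all of its $x$-derivatives are uniformly bounded in terms of $|b|_{C^1}$; after periodization over $2\pi\Z$ (which converges by the $|x-x'|^{-2}$ decay of the integrands), the periodized kernel is $C^\infty$ in $x$ with every derivative controlled. Hence $B[b]: L^2(\T^1) \to C^\infty(\T^1)$ and $DB_\theta[b]: L^2(\T^1) \to H^s(\T^1)$ for every $s \ge 0$, with norm $C_s(|b|_{C^1})$. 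Composing,
\[
\|Mf\|_{H^s} \le \|DB_\theta[b]\|_{L^2\to H^s}\,\|A_\theta[b]f\|_{L^2} \le C_{rs}(|b|_{C^1})\|f\|_{H^{-r}},
\]
which yields the smoothing estimate; specializing to $r=s=0$ produces the first bound with the asserted $\mathcal{O}(|b|_{C^1})$ dependence.

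The main technical obstacle is the second summand of $B[b]$, whose log-kernel is composed with the auxiliary Dirichlet--Neumann operator $\widetilde G[-h+b]$ applied to $f$, so that one is smoothing against a possibly distributional input. The cleanest remedy is to integrate by parts in $x'$ to move derivatives off of $\widetilde G[-h+b]f$ onto the log kernel (whose $x'$-derivatives are bounded and decay at infinity), reducing matters to boundedness of $\widetilde G[-h+b]$ on a fixed Sobolev scale, which is available from the Dirichlet--Neumann calculus of \cite{CGNS05} already invoked in the text.
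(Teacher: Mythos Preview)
Your proposal is correct and follows essentially the same route as the paper's proof in Section~5: the same factorization $M = -DB_\theta[b]\,A_\theta[b]$, the same exponential-decay estimate for $A_\theta$ (the paper phrases it via the periodized kernel rather than the symbol $\sigma_n$), and the same periodized-kernel argument showing $DB_\theta: L^2 \to H^s$. Your handling of the $\widetilde{G}[-h+b]$ term is also the paper's --- it makes the integration-by-parts remedy explicit by writing $\widetilde{G}[-h+b] = D\,\widetilde{H}[-h+b]$ via \cite{CSS92}, with $\widetilde{H}$ bounded on $L^2$, and moving the derivative onto the kernel.
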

The proof of this proposition is given in Section 5.
As a consequence of these two propositions, the operator $G_\theta$
defined in \eqref{G-theta} maps $H^1(\T^1)$ to $L^2(\T^1)$.

\subsection{Floquet theory}

The spectrum of the Dirichlet -- Neumann operator $G[b]$ acting on the
domain $H^1(\R) \subseteq L^2(\R)$ is real, non-negative, and is
composed of bands and gaps. It is the union over 
$-\frac{1}{2} \le \theta < \frac{1}{2}$ of the Bloch eigenvalues 
$\Lambda_n(\theta)$, the analog to Bloch theory for the Schr\"odinger
operator.  

When $b=0$, the spectrum of $G_\theta$ on $L^2(\T^1)$ consists of the
Bloch eigenvalues $\Lambda^{(0)}_n(\theta)$ 
which are labeled in order of increasing magnitude.
The eigenvalues are
periodic in $\theta$ of period one, and are simple when 
$\theta \ne -\frac{1}{2},0,\frac{1}{2}$. For 
$\theta = -\frac{1}{2},0,\frac{1}{2}$, the spectrum is 
double (see Fig.1).  Denoting $ g_n(\theta) = (n+\theta)\tanh(h(n+\theta))$,  
the eigenvalues and eigenfunctions associated to $G_\theta$ are given
as follows:  
\begin{align*}
  & {\rm For} \ & -\frac{1}{2} \le \theta < 0, \quad  
  & \Lambda_{2n}^{(0)} (\theta) = g_{-n}(\theta);
    \ &\psi_{2n}^{(0)}(x,\theta) = e^{-inx} ~, \\
  & {\rm for} \  & 0  \le \theta < \frac{1}{2}, \quad  
  & \Lambda_{2n}^{(0)} (\theta) = g_{n}(\theta);
    \ &\psi_{2n}^{(0)}(x,\theta) = e^{inx} ~, 
\end{align*}
and
\begin{align*}
  & {\rm for} \ & -\frac{1}{2} \le \theta < 0, \quad  
  & \Lambda_{2n-1}^{(0)} (\theta) = g_{n}(\theta); \ &\psi_{2n-1}^{(0)}(x,\theta) = e^{inx}~, \\
  & {\rm for} \ & 0 \le \theta < \frac{1}{2}, \quad  
  & \Lambda_{2n-1}^{(0)} (\theta) = g_{-n}(\theta);
  \ &\psi_{2n-1}^{(0)}(x,\theta) = e^{-inx} ~.
\end{align*}
With this definition, both $\Lambda_{n}^{(0)}$ and $\psi_{n}^{(0)}$
are periodic in $\theta$ with period $1$  and 
$\Lambda_{n}^{(0)}$ is continuous in $\theta$ while $\psi_{n}^{(0)}$ has 
discontinuities at $\theta = -\frac{1}{2},0,\frac{1}{2}$.

The goal of our analysis is to show that in the presence of a variable
periodic topography, spectral curves which meet when $b = 0$ typically
separate, creating spectrum gaps corresponding to zones of forbidden
energies. For this purpose, assume that the bottom topography is given
by $y(x) = -h + b(x)$ where $b$ is a $2\pi$-periodic function in the
ball $B_R(0) \subseteq C^1$, with $\int_0^{2\pi} b(x) \, dx = 0$,  $h-b(x) \ge c_0>0$. For
our analysis, the circle $\theta \in \bS^1$ of Floquet exponents is
divided into regions in which unperturbed spectra are simple (outer
regions), and regions that include the unperturbed multiple spectra
(inner regions). To apply the method of continuity, these regions are
defined so that they  overlap.   

\begin{theorem}\label{thm1}
For all $\theta \in (-\frac{3}{8},-\frac{1}{8}) \cup
(\frac{1}{8},\frac{3}{8})$, the $L^2$-spectrum of $G_\theta + M$ on
the domain $H^1(\T^1)$ is composed of an increasing sequence of 
eigenvalues $\Lambda_n(\theta) $ that are simple, and analytic in
$\theta$ and $b \in B_R(0)$. The corresponding eigenfunctions
$\Psi_n(x,\theta)$ are normalized $2\pi$-periodic in $x$, and analytic  
in $\theta$ and $b\in B_R(0)$.  
\end{theorem}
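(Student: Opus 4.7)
The plan is to regard $G_\theta[b] = G_\theta[0] + M$ as an analytic perturbation, in the joint parameters $(\theta,b)$, of the flat--bottom Fourier multiplier $G_\theta[0]$, and to invoke Kato--Rellich analytic perturbation theory for self-adjoint operators with discrete spectrum. The self-adjointness of $G_\theta[b]$ supplied by Proposition~\ref{prop1} and the boundedness estimate $\|Mf\|_{L^2} \le C_0(|b|_{C^1})\|f\|_{L^2} = \mathcal{O}(|b|_{C^1})$ from Proposition~\ref{prop2} are precisely the ingredients that make this standard machinery apply.

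First I would establish a uniform spectral gap for the unperturbed operator $G_\theta[0]$ on the compact set $K = [-3/8,-1/8] \cup [1/8,3/8]$. Since $G_\theta[0]$ acts diagonally as $G_\theta[0]e^{ijx} = g_j(\theta)e^{ijx}$ with $g_j(\theta) = (j+\theta)\tanh(h(j+\theta))$, and since $u\mapsto u\tanh(hu)$ is even and strictly increasing on $[0,\infty)$, one has $g_j(\theta) = g_k(\theta)$ with $j\ne k$ only at half-integer $\theta$, hence never on $K$. A direct computation using the disjointness of the intervals $\{|j+\theta|:\theta\in K\}$ yields $\delta = \delta(h) > 0$ with
\[
\inf_{\theta\in K}\;\min_{j\ne k}\,|g_j(\theta)-g_k(\theta)|\ \ge\ \delta.
\]
Consequently the unperturbed eigenvalues, labelled by magnitude as $\Lambda_n^{(0)}(\theta)$, are simple and separated by a spectral gap that is uniformly bounded below on $K$.

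Next I would verify the joint analyticity of $M = M(\theta,b)$ on a neighborhood of $K\times\{0\}$ as a map into the bounded operators on $L^2(\T^1)$. Analyticity in $b\in B_R(0)\subset C^1(\T^1)$ follows from the convergent Taylor expansion of $DL[b]$ in powers of $b$ established in~\cite{CGNS05}, while analyticity in $\theta$ can be read off from the explicit formula~\eqref{operators}: the integral kernels in~\eqref{operatorsAB} are independent of $\theta$, while the conjugating factors $e^{\pm i\theta x}$ are entire in $\theta$, so that $M(\theta,b)$ is entire in $\theta$ with values in bounded operators, with norm controlled uniformly in $\theta\in K$ by Proposition~\ref{prop2}. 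After shrinking $R$ so that $\|M\|_{L^2\to L^2} < \delta/4$ on $K\times B_R(0)$, one applies the Riesz projection construction: around each $\Lambda_n^{(0)}(\theta_0)$ select a contour $\Gamma_n$ of radius $\delta/2$, which lies in the resolvent set of $G_{\theta_0}[0]$ by the gap estimate and remains in the resolvent set of $G_\theta[0]+M$ for $(\theta,b)$ near $(\theta_0,0)$ by a Neumann-series argument based on the second resolvent identity. The projector
\[
P_n(\theta,b) = -\frac{1}{2\pi i}\oint_{\Gamma_n}\bigl(G_\theta[b]-z\bigr)^{-1}\,dz
\]
is then rank-one, self-adjoint, and jointly analytic in $(\theta,b)$; one extracts the analytic eigenvalue from $\Lambda_n(\theta) = \mathrm{tr}\bigl(G_\theta[b]\,P_n(\theta,b)\bigr)$ and an analytic normalized eigenvector $\Psi_n(x,\theta)$ by projecting the reference vector $\psi_n^{(0)}(x,\theta_0)$ onto $\mathrm{Ran}\,P_n$ and normalising, which stays nonzero near $(\theta_0,0)$ because $P_n(\theta_0,0)$ acts as the identity on that vector.

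The main obstacle is not the abstract perturbation argument itself but rather the careful verification that $M(\theta,b)$ is indeed jointly analytic as an $L^2\to L^2$-valued map, given that $M$ is a composition of the nonlocal operators $DB[b]$ and $A[b]$ of~\eqref{operatorsAB} whose integral kernels live on $\R$ and act on $\theta$-periodic functions. Once this analyticity, together with the uniform norm bound of Proposition~\ref{prop2}, is in hand, the compactness of $K$ allows one to cover it by finitely many analytic neighborhoods, and the strict ordering $\Lambda_n(\theta)<\Lambda_{n+1}(\theta)$ persists from the unperturbed case under a sufficiently small perturbation by continuity of the Cauchy integrals.
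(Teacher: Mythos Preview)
Your argument via Kato--Rellich perturbation theory and Riesz projections is correct, and indeed the paper itself concedes that Theorem~\ref{thm1} ``is a direct consequence of the general theory of perturbation of self-adjoint operators~\cite{R69}.'' The one point you flag as delicate---joint analyticity of $M(\theta,b)$---is genuine but tractable: analyticity in $b$ is the Taylor expansion of~\cite{CGNS05}, and analyticity in $\theta$ follows because $G_\theta[0]$ is a Fourier multiplier with symbol $(j+\theta)\tanh(h(j+\theta))$ on the fixed domain $H^1(\T^1)$, hence a holomorphic family of type~(A), while conjugation of $DL[b]$ by $e^{\pm i\theta x}$ is manifestly entire in $\theta$.

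The paper, however, deliberately takes a different route in Section~\ref{sec-perturb}: rather than Riesz projections, it seeks an anti-Hermitian $T$ (nonzero only in the $n^{\mathrm{th}}$ row and column in Fourier coordinates) so that $e^{-T}(G_\theta+M)e^T$ is block-diagonal with respect to the projection $P_n$ onto $\mathrm{span}\{e^{inx}\}$, and solves for $T=T(M)$ via the implicit function theorem in a scale of operator spaces with weighted off-diagonal decay norms $\|A\|_r = \sup_j\sum_l |A_{jl}|\langle j-l\rangle^r$. Your approach is shorter and more standard for the simple-spectrum regime. What the paper's approach buys is twofold: first, it transfers essentially unchanged to the double-eigenvalue case of Theorem~\ref{thm2}, where $P_n$ becomes rank two and the conjugation reduces the problem to an explicit $2\times2$ matrix whose eigenvalues can be computed perturbatively (Section~\ref{sec-perturbation}); second, the resulting $T$ carries quantitative off-diagonal decay information on the eigenfunctions ($\delta_n - \hat\psi_{2n}(l) \le C_n\langle l\rangle^{-r}$), which the bare Riesz projection does not immediately give. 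In short, your proof is a clean and correct shortcut for Theorem~\ref{thm1} alone, while the paper's construction is engineered to serve the gap-opening analysis that follows.
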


The result in Theorem \ref{thm1} is a direct consequence of the
general theory of perturbation of self-adjoint operators 
\cite{R69}. However in Section \ref{sec-perturb}, we provide a
straightforward alternate proof by means of the implicit function
theorem; this approach also serves to motivate the proof of the following result. 

\begin{theorem}\label{thm2}
In the neighbourhood of the crossing points $\theta = 0, \pm\frac{1}{2}$, 
i.e for $\theta \in [-\frac{1}{2},-\frac{5}{16})  \cup
  (-\frac{3}{16},\frac{3}{16}) \cup (\frac{5}{16},\frac{1}{2}]$, 
the spectrum of $G_\theta + M$  on
the domain $H^1(\T^1)$  is composed of an increasing sequence
of eigenvalues $\Lambda_n(\theta)$ which are continuous in $\theta$. 
For $\frac{-3}{16}<\theta <\frac{3}{16}$, the lowest eigenvalue
$\Lambda_0(\theta)$ is simple, and it and the eigenfunction
$\Psi_0(x,\theta)$ are analytic in $\theta$ and $b$. 
\end{theorem}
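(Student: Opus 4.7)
The plan is to split the assertion into two independent pieces: first, a spectral continuity statement valid on all three subintervals; second, a finer analyticity and simplicity statement concerning only the lowest eigenvalue on the central interval $(-3/16, 3/16)$. Part one will follow from standard self-adjoint perturbation theory once compactness of the resolvent is in hand, while part two will exploit the structural identity $G[b]\cdot 1 = 0$, which pins the ground state to a simple eigenvalue at $\theta = 0$ for every admissible $b$.

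For the continuity statement I would decompose $G_\theta[b] = G_\theta[0] + M$. The unperturbed part $G_\theta[0]$ is diagonal in the Fourier basis $\{e^{ijx}\}_{j \in \Z}$ with eigenvalues $(j+\theta)\tanh(h(j+\theta))$, and is therefore self-adjoint on $H^1(\T^1)$ with compact resolvent. By Proposition~\ref{prop2}, $M$ is bounded (in fact smoothing) on $L^2(\T^1)$, so $G_\theta[b]$ is a bounded self-adjoint perturbation, still with compact resolvent and discrete spectrum accumulating only at $+\infty$. The Courant--Fischer min-max principle then yields continuity of each ordered eigenvalue $\Lambda_n(\theta)$ in $\theta \in \bS^1$ and $b \in B_R(0)$, uniformly across the three subintervals; this part of the argument is blind to the crossings.

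For the analyticity of $\Lambda_0$, I would first upgrade $\Lambda_0(0) = 0$ from a simple unperturbed eigenvalue to a simple eigenvalue for every $b \in B_R(0)$. The harmonic extension of the constant function $1$ in $\mathcal{S}(b,0)$ is itself constant, so $G[b]\cdot 1 = 0$; conversely, the Dirichlet integral identity $\langle G[b]\xi,\xi\rangle = \iint_{\mathcal{S}(b,0)} |\nabla\varphi|^2\,dx\,dy$ forces any $\xi \in H^1(\T^1)$ with $G[b]\xi = 0$ to be constant. The unperturbed gap to the next eigenvalue at $\theta = 0$ is $\tanh(h)$, so provided $R$ is chosen so that the bound $C_0(R)$ from Proposition~\ref{prop2} is strictly smaller than half this gap, the perturbed gap remains positive. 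By the continuity from the first step and a direct comparison of $g_0(\theta)$ with $g_{\pm 1}(\theta)$, this gap persists on a $\theta$-neighborhood of $0$ which contains $(-3/16,3/16)$.

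Once the isolation of $\Lambda_0(\theta)$ is guaranteed on $(-3/16,3/16) \times B_R(0)$, the analyticity statement follows from the implicit function theorem as in the forthcoming proof of Theorem~\ref{thm1} in Section~\ref{sec-perturb}. I would set $\Psi_0 = 1 + \phi$ with $\phi$ orthogonal to the constants, impose the eigenvalue equation together with the normalization $\langle \Psi_0, 1\rangle = 1$, and solve for $(\phi, \Lambda_0)$ using the invertibility of the restriction of $G_\theta[0] - \Lambda_0^{(0)}(\theta)$ to the orthogonal complement; the analyticity of $G[b]$ in $b$ established in~\cite{CGNS05} together with the explicit analytic dependence of $G_\theta[0]$ on $\theta$ then transfers to joint analyticity of $\Lambda_0$ and $\Psi_0$. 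The principal obstacle is the quantitative comparison between the operator norm $C_0(R)$ of $M$ and the $\theta$-dependent unperturbed gap $g_{\pm 1}(\theta) - g_0(\theta)$ on the subinterval $(-3/16,3/16)$; this is exactly what dictates the admissible size of $R$ and justifies the specific numerical choice of the inner interval in the statement.
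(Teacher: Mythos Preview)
Your proposal is correct, and for the analyticity of $\Lambda_0$ you follow essentially the paper's own route: treat the ground state as a simple eigenvalue and invoke the implicit function theorem construction of Section~\ref{sec-perturb}. Your use of the identity $G[b]\cdot 1 = 0$ and of the Dirichlet integral to pin down $\Lambda_0(0)=0$ with multiplicity one for \emph{every} $b$ is a clean addition; the paper records this fact in the introduction but in Section~\ref{sec-double} simply observes that the unperturbed $\Lambda_0^{(0)}(\theta)$ is simple on $(-\tfrac{3}{8},\tfrac{3}{8})$ and appeals to the general perturbation theory or to the simple-eigenvalue construction of Section~\ref{sec-perturb}.

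The genuine difference is your treatment of continuity of the higher $\Lambda_n(\theta)$ across the crossing points. You obtain it in one stroke from compactness of the resolvent plus min--max, which is insensitive to multiplicity. The paper instead reaches continuity constructively: for each pair $\Lambda_{2n-1},\Lambda_{2n}$ near $\theta=0$ it builds, via Theorem~\ref{theor-IFT-2}, an anti-Hermitian $T$ (analytic in $\theta$ and $b$) that block-diagonalizes $G_\theta+M$ onto ${\rm span}\{e^{\pm inx}\}$, after which the two eigenvalues are read off from an analytic $2\times 2$ Hermitian matrix. Your argument is shorter and more robust for the bare continuity claim; the paper's block-diagonalization, on the other hand, is what drives the entire perturbation analysis of gap opening in Section~\ref{sec-perturbation}, so it is doing double duty. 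Both are valid proofs of the theorem as stated.
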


Both Theorems \ref{thm1} and \ref{thm2} are local in $\theta$. Their
domains of definition overlap on the intervals 
$\theta \in (-\frac{3}{8}, -\frac{5}{16}) \cup 
  (-\frac{1}{8}, -\frac{3}{16}) \cup  
  (\frac{1}{8}, \frac{3}{16}) \cup 
  (\frac{5}{16},  \frac{3}{8})
$.
By uniqueness, in these intervals the eigenvalues and eigenfunctions
agree. Hence they are globally defined periodic functions of $\theta$
in the interval $[-\frac{1}{2}, \frac{1}{2})$. We will focus on
results concerning the opening of spectral gaps at $\theta = 0$ for 
eigenvalues $\Lambda_{2n-1} (\theta) $ and $\Lambda_{2n} (\theta)$;    
this is the topic of Section~\ref{sec-double}. The analysis near the
double points $\theta =\pm \frac{1}{2} $ is similar.

An illustration of  eigenvalues as functions of $\theta$ is given in
Figure \ref{fig1}. The left hand side shows the unperturbed   first
five  eigenvalues in the case of a flat bottom labeled in order of
magnitude.  The right hand side shows these eigenvalues in the
presence of  a small generic bottom perturbation and  the gap
openings. 


\begin{figure}
\centering
  \subfigure{\includegraphics[width=6.25cm]{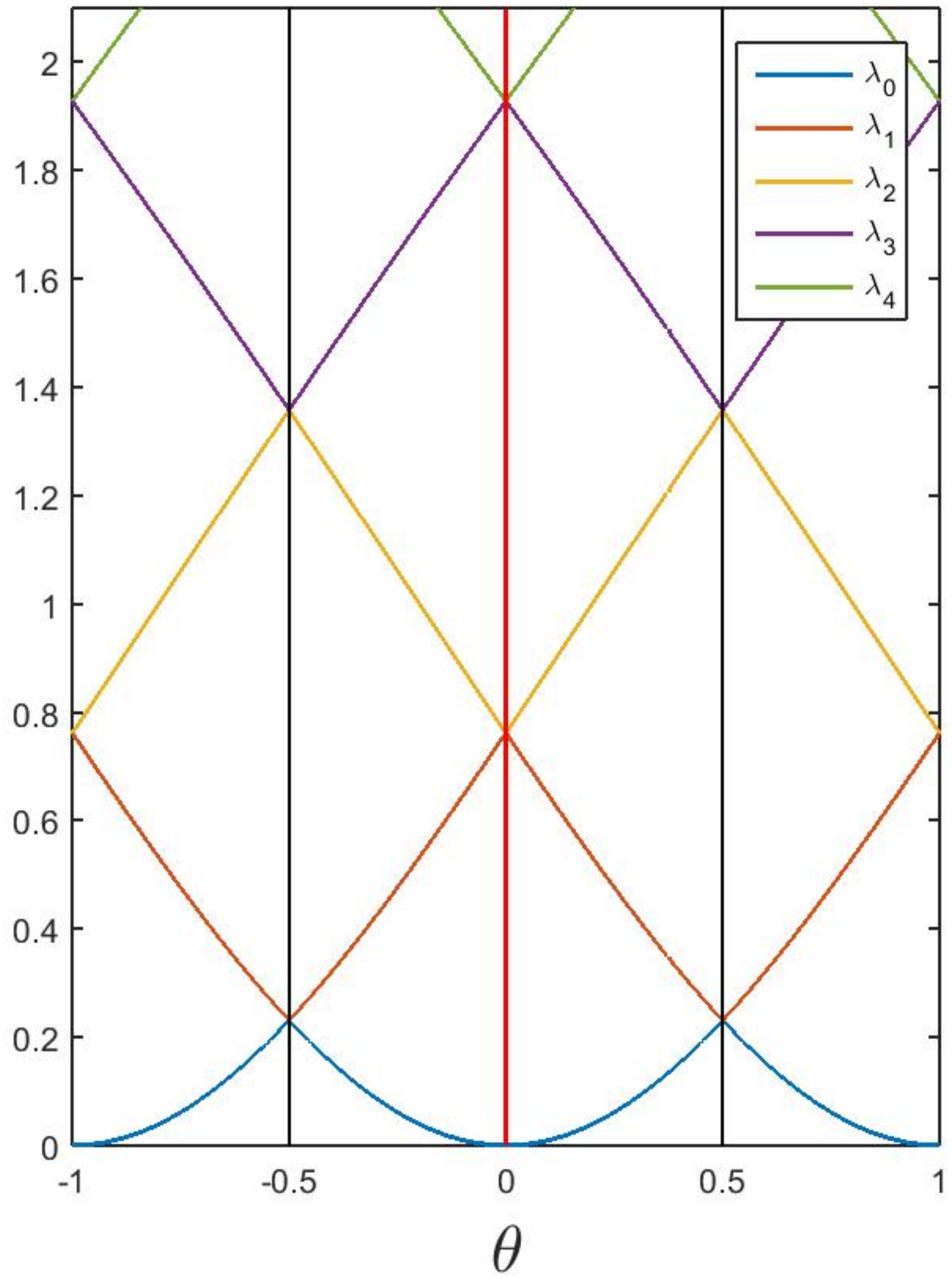}}
  \subfigure{\includegraphics[width=6.25cm]{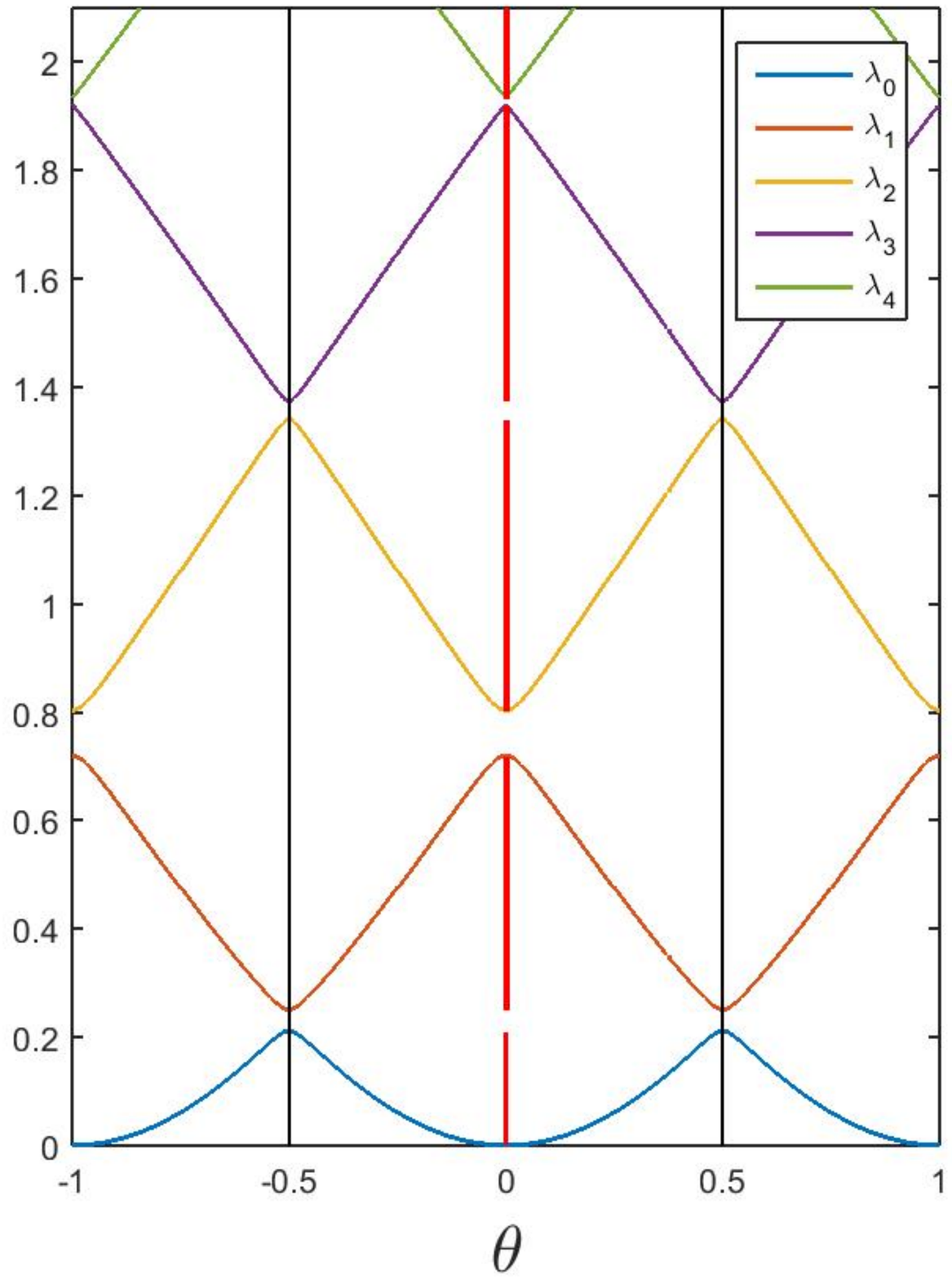}}
  \caption{ First five eigenvalues in order of magnitude:  (left) flat
    bottom; (right) in the presence of a small generic bottom
    perturbation. The vertical axis is the $\Lambda$ axis, and the
    spectra of the operators $G[0]$ and $G[b]$ are represented by the
    vertical red line and the red intervals, respectively.} 
  \label{fig1}
\end{figure}

\section{Gap opening} 
   \label{sec-opening}


\subsection{A finite-dimensional model of gap opening}\label{model}

We describe now, on a simplified model, the mechanism through which
there is the opening of a gap between the two eigenvalues
$\Lambda_{2n-1}$ and $\Lambda_{2n}$ near $\theta=0$ in the presence of
a periodic bathymetric variation $b(x)$. Denote by $P_n$ the orthogonal
projection in $L^2(\T^1)$ onto the subspace spanned by
$\{e^{inx},e^{-inx}\}$ and decompose the operator $G_\theta + M$ as 
\begin{align} \label{decomp}
    G_\theta + M & = P_n (G_\theta+M) P_n + (I-P_n) (G_\theta+M) P_n  \\
   & + P_n (G_\theta+M) (I-P_n) + (I-P_n) (G_\theta+M)(I-P_n) ~.\nonumber
\end{align}
We consider a $2\times 2$ matrix model of \eqref{decomp}, showing that the presence of
a periodic perturbation involving nonzero Fourier coefficients 
$b_{\pm 2n}$ of the bathymetry $b(x)$ leads to a gap between
eigenvalues $\Lambda_{2n-1}(\theta)$ and $\Lambda_{2n}(\theta)$ 
at $\theta = 0$. This model also exhibits how the corresponding
eigenfunctions ${\Psi}_{2n-1} $ and $\Psi_{2n}$ are 
modified. The precise model consists in dropping the three last terms
in the rhs of \eqref{decomp}, reducing $G_0 + M$ to its first term
$P_n( G_\theta+M)P_n$. In addition, we simplify the correction term
$M= e^{-i\theta x} DL[b] e^{i\theta x}$ by replacing $DL[b]$ by its
one term Taylor series approximation in $b(x) \in C^1$, i.e. namely 
\[
   DL_1[b] = -  D \sech(hD) b(x) D \sech(hD) ~.
\]
Acting on Fourier coefficients of a periodic function $\psi(x)$, 
the operator $P_n (G_\theta + e^{-i\theta x}DL_1[b]e^{i\theta x})P_n$ 
is represented by the matrix 
\begin{equation}
   A = \begin{pmatrix} g_n(\theta) & b_{2n} s_n(\theta) s_n (-\theta) \\
       \overline{b}_{2n} s_n(\theta) s_n(-\theta) & g_n(-\theta)
       \end{pmatrix} ~,
\end{equation}
where we have defined $s_n(\theta) = (n+\theta) \sech(h(n+\theta))$. 
We conjugate this matrix to diagonal form as $A = O  \Lambda O^*$
where 
\begin{equation}
   {\Lambda} = \begin{pmatrix} \Lambda_{2n}(\theta) & 0 \\
                 0 & \Lambda_{2n-1}(\theta) \end{pmatrix} ~,
\end{equation}
and where $T= \begin{pmatrix}  0 & \varphi \\ -\varphi & 0 \end{pmatrix}$ 
so that 
$O = \begin{pmatrix} \cos \varphi & \sin \varphi \\ 
     -\sin \varphi &\cos \varphi \end{pmatrix} = e^T= (O^{-1})^*$ 
is a rotation. The eigenvalues 
$\Lambda_{2n-1}(\theta) \leq \Lambda_{2n}(\theta)$ are explicitly given as
\[
   \frac{1}{2} \Big ( g_n(\theta) +g_n(-\theta) \pm \sqrt
        {(g_n(\theta) -g_n(-\theta))^2  + 4 |b_{2n}|^2
          s_n(\theta)^2s_n(-\theta)^2 } \Big) ~. 
\]
Assuming $b_{2n} \ne 0$, the eigenvalues split near $\theta=0$,
and in particular 
$\Lambda_{2n} (0)- \Lambda_{2n-1}(0) = \mathcal{O}(|b_{2n}|)$. 
The corresponding eigenfunctions are 
${ \Psi}_{2n-1}(\theta) = O^* \begin{pmatrix} 1 \\ 0 \end{pmatrix}$  
and ${\Psi}_{2n}(\theta) = O^*\begin{pmatrix} 0 \\ 1 \end{pmatrix}$,
where $\varphi \sim 0$ for $\theta > 0$ and $\varphi \sim \pi/2$ for 
$\theta < 0$. The eigenvalues $\Lambda_{2n-1}(\theta)$,
$\Lambda_{2n}(\theta)$ are Lipschitz continuous in $\theta$. For
nonzero $b_{2n}$, both the matrix $O$ and the eigenvalues $\Lambda_{2n-1}(\theta)$,
$\Lambda_{2n}(\theta)$ are continuous and indeed  even locally analytic. 

\subsection{Perturbation of simple eigenvalues}  
   \label{sec-perturb}

We now take up the spectral problem for the full Dirichlet -- Neumann
operator $G[b]$. Consider values of the parameter $\theta$ in the
interval $\frac{1}{8}  < \theta < \frac{3}{8}$, for which  
$\Lambda_j^{(0)} (\theta) = g_j(\theta)$ is a simple eigenvalue for 
all $j$. With no loss of generality, suppose that $j=2n$ even (for $j$
odd and $\theta > 0$ the only change has to do with the indexing, as
is the case of $\theta < 0$).
By analogy with finite dimensional problem, we seek a conjugacy that
when described in terms of the Fourier transform, will reduce the operator
$G_\theta + M$ to a matrix operator whose off-diagonal entries are zero 
in the $n^{th}$ row and column. Specifically for the $n^{th}$ eigenvalue, we
seek a transformation $O = e^T$ parametrized by operators $T$ satisfying 
$T^*=-T$, such that in acting on Fourier series the matrix
$e^{-T}(G_\theta+M) e^{T} $ will be block diagonal. For this purpose, 
use the orthogonal projection $P_n$ onto the span of the Fourier mode
$e^{inx}$  in $L^2(\T^1)$, and decompose the operator $e^{-T} (G_\theta+M) e^T$ as 
\begin{align}\label{decomp2}
  & e^{-T} ( G_\theta + M) e^T = P_n e^{-T}(G_\theta + M) e^T P_n 
    + (I-P_n)e^{-T} (G_\theta+M) e^T P_n  \\
  & \qquad + P_n e^{-T} (G_\theta+M) e^T (I-P_n) 
    + (I-P_n) e^{-T}(G_\theta+M) e^T(I-P_n) ~. \nonumber
\end{align}
We are seeking $T$ that is an anti-Hermitian operator such that the
block off-diagonal components of \eqref{decomp2} satisfy  
\begin{equation}\label{Eqn:F=0}
  (I-P_n) e^{-T}(G_\theta+M)e^T P_n + P_n e^{-T}(G_\theta+M)e^T (I-P_n) = 0 ~.
\end{equation}
The existence of such $T$ will follow from the implicit function
theorem  \cite{N01}, applied in a space of operators. Define 
$F : \: (T,M) \; \rightarrow \; F(T,M)$, where
\begin{align}
  & F_1(T,M) = P_n  e^{-T} (G _\theta+M)  e^T(I-P_n) ~, \;\; F_2(T,M)
    = F_1^*(T,M) ~, \label{funct-F1-2} \\
  &  F(T,M) = F_1(T,M) + F_2(T,M) ~. \label{funct-F}
\end{align}
The goal is to solve $F(T,M) = 0$, describing $T$ as a function of $M$ in
appropriate functional spaces. We look for $T$ a solution of \eqref{Eqn:F=0} 
restricted to the space of anti-Hermitian operators with the
additional mapping property that    
\begin{equation}\label{antisym}
   T (P_n L^2)  \subseteq (I-P_n) L^2, \quad {\rm{and}} 
       \quad  T ((I-P_n) L^2)  \subseteq P_n L^2 ~.
\end{equation}
The following analysis is performed in Fourier space coordinates,
which is to say in a basis given in terms of Fourier series. 
Denote $h^r$ the space of $2\pi$-periodic functions in
$H^r$, represented in Fourier series coordinates. Alternately using
Plancherel, this characterizes $\psi \in h^r$ by its sequence of
Fourier coefficients; 
\[
     h^r = \{ (\psi_l)_{l\in\Z} \ : \ \sum_{l \in \Z} \langle l \rangle^{2r}|\psi_l|^2
     < +\infty \} ~,
\] 
where as usual we write $\langle l \rangle = \sqrt{2}(|l|^2 + 1)^{1/2}$.

In Fourier space variables, operators have a matrix representation in
the basis $\{\frac{1}{\sqrt{2\pi}}e^{ikx}\}_{k\in \Z}$, defined by
\[
     Ae^{i l x} = \sum_{j\in \Z} A_{jl} e^{ijx} ~. 
\]
For the most part, we will be concerned with the Hermitian and
anti-Hermitian operators.  A scale of operator norms for Hermitian
(and anti-Hermitian) operators represented in Fourier coordinates is
given by    
\[
  \|A\|_r = \sup_j\bigl(\sum_{l\in\Z} |A_{jl}| \langle j - l \rangle^r \bigr) ~.
\]
This norm quantifies the off-diagonal decay of the matrix elements of
$A$. The identical expression for a norm is used for anti-hermitian
operators $T: h^r \to h^r$, while for a general operator $Y$ we need to use
\[
   \| Y \|_r := \Bigl(\sup_j\sum_k |Y_{jk}|\langle j-k \rangle^r \Bigr)^{1/2}
   \Bigl(\sup_k\sum_j |Y_{jk}|\langle j-k \rangle^r \Bigr)^{1/2} .
\]
The space of linear operators from the Sobolev space $h^r$ to itself that
have finite $r$-norm is denoted by ${\mathcal L}_r$. It is a Banach
space with respect to this norm. The space of Hermitian symmetric
operators with finite $r$-norm is denoted by ${\mathcal H}_r$ while
the space of anti-Hermitian symmetric operators with finite $r$-norm
is denoted by ${\mathcal A}_r$. When $r=0$, this norm dominates the
usual operator norm on $\ell^2$, while for $r > 0$ the expression
gives a norm which is a bound for $A \ : \ h^r \to h^r$. 
One notes that this is indeed a proper operator norm, such that 
$\| M \psi \|_{h^r} \leq \| M \|_r \| \psi \|_{h^r}$. In fact
if $\| M \|_r < +\infty$ this same inequality holds when considering
$M: h^s \to h^s$ for any $0 \leq s \leq r$. 

\begin{proposition}\label{Prop:3.1}
Taking $b \in B_R(0) \subseteq C^1(\T^1)$,  then for all $r \geq 0$,
the Fourier representation of the operator $M$ defined in 
\eqref{operators} satisfies 
\[
\|M \|_r =   \sup_j\bigl(\sum_{l\in\Z} |M_{jl}| \langle j - l \rangle^r \bigr)
   \leq C_r  (|b|_{C^1}) = \BigOh{|b|_{C^1}}~.
\]
\end{proposition}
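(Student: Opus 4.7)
The plan is to expand $DL[b]$ in its Taylor series in powers of $b$, which converges on $C^1$ balls by \cite{CGNS05}, and then to compute each term's Fourier matrix elements, exploiting the exponential decay of the symbols of $\sech(hD)$ and $\tanh(hD)$ to dominate the polynomial weight $\langle j - l\rangle^r$ appearing in the $r$-norm. The $\theta$-conjugation only shifts these Fourier multipliers by $\theta$, which for $|\theta|\le 1/2$ preserves the exponential decay, so the estimates will be uniform in $\theta$.

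For the leading Taylor term $DL^{(1)}[b] = -D\sech(hD)\,b(x)\,D\sech(hD)$, the matrix element of $M^{(1)} = e^{-i\theta x}DL^{(1)}[b]e^{i\theta x}$ in the basis $\{e^{ilx}\}_{l\in\Z}$ is
\[
   M^{(1)}_{jl} = -(j+\theta)\sech(h(j+\theta))\,\hat b(j-l)\,(l+\theta)\sech(h(l+\theta)).
\]
Using $|\hat b(m)| \le C|b|_{C^1}$ uniformly in $m\in\Z$ and $|(k+\theta)\sech(h(k+\theta))| \le C e^{-h|k|/2}$ for all $k\in\Z$, one then bounds
\[
   \sum_l |M^{(1)}_{jl}|\langle j-l\rangle^r
   \le C|b|_{C^1}\, e^{-h|j|/2}\sum_{l\in\Z} e^{-h|l|/2}\langle j-l\rangle^r,
\]
and splitting the inner sum over $|l|\le |j|/2$ and $|l|>|j|/2$, combined with the Peetre-type inequality $\langle j-l\rangle \le \langle j\rangle\langle l\rangle$ (which holds for the paper's definition of $\langle \cdot \rangle$), lets the exponential factors absorb the polynomial weight, yielding $\|DL^{(1)}\|_r \le C_r |b|_{C^1}$ uniformly in $\theta$.

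The $m$-th order Taylor term $DL^{(m)}(b^{\otimes m})$ is a composition of $m$ multiplications by $b(x)$ interleaved with Fourier multipliers inheriting exponential decay from $\sech(hD)$ and $\tanh(hD)$. Its matrix element is a sum over intermediate frequencies $k_1,\dots,k_{m-1}\in\Z$ of a product containing a factor $\hat b(k_i - k_{i-1})$ at each multiplication and an exponentially decaying symbol at each intermediate frequency. Iterating the one-step estimate and distributing the weight via $\langle j - l\rangle^r \le \prod_i \langle k_i - k_{i-1}\rangle^r$ along the telescoping chain $j = k_0, k_1,\dots, k_{m-1}, k_m = l$ produces a bound of the form $\|DL^{(m)}\|_r \le C_r^m |b|_{C^1}^m$. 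Summing this geometric series over $m \ge 1$ for $b$ in a sufficiently small ball $B_R(0) \subseteq C^1(\T^1)$ gives $\|M\|_r \le C_r(|b|_{C^1}) = \BigOh{|b|_{C^1}}$.

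The \emph{main obstacle} is securing the polynomial off-diagonal weight $\langle j-l\rangle^r$ uniformly in the order $m$: multiplication by $b(x)$ alone provides no $r$-norm bound when $r \ge 1$, so all of the gain must come from the exponentially decaying Fourier multipliers that sandwich each $b(x)$ factor. Tracking this through $m$ iterated convolutions with constants uniform in $m$ and $\theta$ is the technical heart of the argument. A secondary point is to verify that the explicit integral representation \eqref{operatorsAB} of $A[b]$ and $B[b]$, including the logarithmic kernel and $\widetilde G[-h+b]$ factor in $B[b]$, indeed fits the Taylor expansion pattern with exponentially decaying symbols at each order; this is essentially the content of \cite{CGNS05}.
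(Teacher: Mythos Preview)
Your approach differs from the paper's. The paper does not expand $M$ in its Taylor series to prove this bound; it simply invokes Proposition~\ref{prop2} (the strong smoothing estimate $\|Mf\|_{H^s}\le C_{rs}(|b|_{C^1})\|f\|_{H^{-r}}$ for all $r,s>0$), whose proof in Section~5 uses the closed-form factorization $DL[b]=-DB[b]A[b]$ and the explicit integral kernels~\eqref{operatorsAB}, never the Taylor expansion. From that smoothing one reads off $|M_{jl}|=|\langle Me_l,e_j\rangle|\le C_{rs}\langle j\rangle^{-s}\langle l\rangle^{-r}$ for arbitrary $r,s$, and then $\sum_l|M_{jl}|\langle j-l\rangle^r<\infty$ follows at once from Peetre's inequality.

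Your Taylor-expansion route has a real gap at the higher-order step. The claim that the $m$-th term carries ``an exponentially decaying symbol at each intermediate frequency'' is false: looking at the explicit formulas for $M_2,M_3,M_4$ in Section~\ref{sec-perturbation}, the multipliers sitting \emph{between} consecutive factors of $\beta$ are $D\tanh(hD)$, $D^2$, $D^3\tanh(hD)$, etc., whose symbols \emph{grow} polynomially. Only the two outermost factors $D\sech(hD)$ decay. Consequently your scheme of distributing $\langle j-l\rangle^r\le\prod_i\langle k_i-k_{i-1}\rangle^r$ and absorbing each factor into an adjacent decaying symbol breaks down: with two exponentially decaying factors (at $k_0=j$ and $k_m=l$) but $m-1$ polynomially growing interior symbols, the intermediate sums need not even converge under the crude bound $|\hat b(m)|\le C|b|_{C^1}$ you use. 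Already for $M_2$ the inner sum $\sum_l\hat\beta(j-l)\,g_l(\theta)\,\hat\beta(l-k)$ diverges with only $|\hat\beta|\le C$ and $|g_l|\sim|l|$.

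A repair within your framework is to note that every term has the shape $M_p=S_\theta R_p S_\theta$ with $S_\theta=(D+\theta)\sech(h(D+\theta))$ diagonal and exponentially decaying, and $R_p$ a finite-order differential-multiplication expression in $\beta$. One then only needs a polynomial bound $|(R_p)_{jl}|\le C_p(\langle j\rangle+\langle l\rangle)^{a_p}$ on the inner block---obtainable from $\beta\in C^1$ via Cauchy--Schwarz and $H^1$ control of $\hat\beta$---since the outer exponentials $s_j(\theta),s_l(\theta)$ absorb any such polynomial. Tracking $C_p$ uniformly in $p$ to sum the series is then essentially the convergence analysis of \cite{CGNS05}. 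The paper's kernel approach avoids all of this bookkeeping by never expanding in powers of $b$.
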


\noindent
This bound follows directly from Proposition~\ref{prop2}. 

We also must consider unbounded operators on sequence spaces, for
instance $A:h^{r} \to h^{r-1}$, for which the operator norm that we
use is given by
\[
   \| A \|_{h^r\to h^{r-1}} := \Bigl(\sup_j\sum_k
      |A_{jk}|\frac{1}{\langle j \rangle}\langle j-k \rangle^r \Bigr)^{1/2}
   \Bigl(\sup_k\sum_j |A_{jk}|\frac{1}{\langle j \rangle}
      \langle j-k \rangle^r \Bigr)^{1/2} .
\]
Denote the space of such operators by 
$\mathcal{H}_{h^r\to h^{r-1}}$.
It is worth the remark that our diagonal operator $G_\theta$ satisfies
$\| G_\theta \|_{h^r\to h^{r-1}} < +\infty$.

\begin{proposition}
For any operators $A:h^{r} \to h^{r-1}$ and $B: h^r \to h^r$ the operator
composition $AB$ satisfies 
\[
    \|AB\|_{h^r\to h^{r-1}} \leq \| A \|_{h^r\to h^{r-1}} \| B \|_r
\]
The same estimate holds for the operator composition $BA$.
\end{proposition}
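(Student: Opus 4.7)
The plan is to interpret the two quantities on the right-hand side as Schur-test bounds on honest operator norms, and then to obtain the estimate by ordinary composition of bounded operators.

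First I would verify that the Schur-type expression $\|A\|_{h^r\to h^{r-1}}$ dominates the operator norm of $A: h^r \to h^{r-1}$. Rescaling via $\tilde A_{jk} = \langle j\rangle^{r-1} A_{jk} \langle k\rangle^{-r}$ turns this into a bounded map $\ell^2 \to \ell^2$ with the same operator norm, to which the standard Schur test applies: $\|\tilde A\|_{\ell^2 \to \ell^2}$ is bounded by the geometric mean of $\sup_j \sum_k |\tilde A_{jk}|$ and $\sup_k \sum_j |\tilde A_{jk}|$. The Peetre-type inequality $\langle j\rangle \leq \langle j-k\rangle \langle k\rangle$ — which holds with constant $1$ under the paper's normalization $\langle l\rangle = \sqrt{2}(1+|l|^2)^{1/2}$, since $1+|j|^2 \leq 2(1+|j-k|^2)(1+|k|^2)$ — gives $\langle j\rangle^{r-1}\langle k\rangle^{-r} \leq \langle j-k\rangle^r/\langle j\rangle$, so the two Schur sums are dominated by the two factors defining $\|A\|_{h^r\to h^{r-1}}$. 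The analogous operator-norm inequality $\|B\psi\|_{h^r} \leq \|B\|_r \|\psi\|_{h^r}$ is already recorded in the paper.

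With these in hand, the $AB$ bound is immediate: $B$ maps $h^r$ into $h^r$ and $A$ then maps $h^r$ into $h^{r-1}$, giving $\|AB\psi\|_{h^{r-1}} \leq \|A\|_{h^r\to h^{r-1}} \|B\psi\|_{h^r} \leq \|A\|_{h^r\to h^{r-1}} \|B\|_r \|\psi\|_{h^r}$. For the $BA$ bound, I would invoke the remark just preceding the proposition that $\|B\|_r$ also controls $B$ as an operator on $h^s$ for every $0 \leq s \leq r$; applied at $s = r-1$ this yields $\|B\|_{h^{r-1}\to h^{r-1}} \leq \|B\|_r$, and composition then gives $\|BA\psi\|_{h^{r-1}} \leq \|B\|_r \|A\psi\|_{h^{r-1}} \leq \|B\|_r \|A\|_{h^r\to h^{r-1}} \|\psi\|_{h^r}$, as required.

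The main subtlety is that a purely combinatorial Schur-to-Schur manipulation closes cleanly only in the $AB$ case: splitting $\langle j-k\rangle^r \leq \langle j-m\rangle^r \langle m-k\rangle^r$ and summing over the intermediate index $m$ exactly matches the two Schur factors of $A$ and $B$ defining the norm of the composition. For $BA$ the weight $\langle j-k\rangle^r/\langle j\rangle$ does not factor cleanly: the only available Peetre relation, $1/\langle j\rangle \leq \langle j-m\rangle/\langle m\rangle$, introduces an extra $\langle j-m\rangle$, which would control the composition by $\|B\|_{r+1}$ rather than by $\|B\|_r$. Routing through the operator-norm interpretation as above is what avoids this apparent loss of a derivative and is the one point in the argument that genuinely requires care.
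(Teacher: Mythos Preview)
Your plan conflates two different quantities on the left-hand side. In this paper $\|Y\|_{h^r\to h^{r-1}}$ is \emph{by definition} the weighted Schur expression
\[
\Bigl(\sup_j\sum_k |Y_{jk}|\,\frac{1}{\langle j\rangle}\langle j-k\rangle^r\Bigr)^{1/2}\Bigl(\sup_k\sum_j |Y_{jk}|\,\frac{1}{\langle j\rangle}\langle j-k\rangle^r\Bigr)^{1/2},
\]
and it is this quantity, evaluated on $AB$, that the proposition bounds. Your chain $\|AB\psi\|_{h^{r-1}}\le\|A\|_{h^r\to h^{r-1}}\|B\|_r\|\psi\|_{h^r}$ controls only the \emph{genuine} operator norm of $AB:h^r\to h^{r-1}$, and you have yourself argued that the Schur expression dominates the operator norm, not the reverse. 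So composing operator norms proves a strictly weaker statement than the proposition, for $AB$ and $BA$ alike.

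The paper's argument for $AB$ is exactly the combinatorial Schur-to-Schur manipulation you mention in your last paragraph: expand $(AB)_{jk}=\sum_l A_{jl}B_{lk}$, use $\langle j-k\rangle^r\le\langle j-l\rangle^r\langle l-k\rangle^r$, and observe that the weight $1/\langle j\rangle$ already sits on the row index of $A$, so each of the two Schur sums for $AB$ factors cleanly into the corresponding sums for $A$ and for $B$. Your diagnosis of the $BA$ case is on target---the weight $1/\langle j\rangle$ now lands on $B$, and shifting it to the intermediate index via Peetre costs a full factor $\langle j-l\rangle$, producing $\|B\|_{r+1}$ rather than $\|B\|_r$. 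The paper asserts the $BA$ estimate without writing out a separate computation; but whatever the intended resolution, the operator-norm detour you propose does not establish the Schur-norm inequality that is actually claimed.
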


\noindent
{\textit{Proof:}}  
We note that in our proof, we principally encounter Hermitian and/or
anti-Hermitian operators $A$ and $B$. Their product  does not share
these properties of symmetry however, and we must use the general
expression for the norm for operators  $h^r \to h^r$.  Take $A$ and
$B$ as in the proposition, where we suppose that $\|A\|_{h^r \to h^{r-1}}$ 
and $\|B\|_r$ are finite, and calculate
{\small{
\begin{align*}
   \|AB\|_{h^{r} \to h^{r-1}} & = \Bigl(\sup_j\sum_k\bigl| \sum_l A_{jl}B_{lk} \bigr|
        \frac{1}{\langle j \rangle}\langle j - k \rangle^r \Bigr)^{1/2}
      \Bigl(\sup_k\sum_j \bigl| \sum_l A_{jl}B_{lk} \bigr|
        \frac{1}{\langle j \rangle}\langle j - k \rangle^r \Bigr)^{1/2}\\
   & \leq \Bigl(\sup_j\sum_k\sum_l |A_{jl}|\frac{1}{\langle j \rangle} 
        \langle j - l \rangle^r  |B_{lk}| \langle l - k \rangle^r 
        \Bigl(\frac{\langle j - k \rangle^r}
             {\langle j - l \rangle^r\langle l - k \rangle^r} \Bigr)\Bigr)^{1/2}\\
   & \quad \times \Bigl(\sup_k\sum_j\sum_l |A_{jl}| \frac{1}{\langle j \rangle} 
             \langle j - l \rangle^r  |B_{lk}| \langle l - k \rangle^r
             \Bigl(\frac{\langle j - k \rangle^r}
           {\langle j - l \rangle^r\langle l - k \rangle^r}\Bigr) \Bigr)^{1/2}~.
\end{align*}
}}
Since
\[
  \frac{\langle j - k \rangle^r}
           {\langle j - l \rangle^r\langle l - k \rangle^r} \leq 1     ~,
\]
then the rhs is bounded by $\|A\|_{h^{r} \to h^{r-1}} \|B\|_r$.
\qed

In addition, the family of operators $F$ defined in \eqref{funct-F}
satisfies the mapping properties \eqref{antisym}; these properties
define a linear subspace 
$\mathcal{Q}_{h^r\to h^{r-1} } \subseteq {\mathcal H}_{h^r\to h^{r-1}}$.
The operator $M \in {\mathcal H}_r$ defined in 
\eqref{operators} is a bounded Hermitian symmetric operator, as
expressed in Proposition~\ref{prop1} and Proposition~\ref{Prop:3.1}. 
We seek a solution $T$ of \eqref{Eqn:F=0} in Fourier coordinates, 
where $T \in \mathcal{A}_r$, which also satisfies the mapping
property~\eqref{antisym}. The set of anti-Hermitian operators
satisfying~\eqref{antisym} is a linear subspace 
$\mathcal{P}_r \subseteq \mathcal{A}_r$ 
(it is not however closed under operator composition). 

Describing $T$ in terms of its matrix elements in Fourier variables, 
because of the property~\eqref{antisym}, 
$T e^{inx} = \sum_{l\in\Z\backslash\{n\}} T_{ln} e^{ilx}$, while for 
$j\ne n$ then $T e^{ijx} = T_{nj}e^{inx} = - \overline{T}_{jn}e^{inx}$. Thus the
operator $T$, identified by its matrix elements $T_{nl} = - \overline{T}_{ln}$, 
is nonzero only in the $n^{th}$ row and column, and its
operator norm is given by   
\[ 
    \|T\|_r = \sum_{l\ne n} |T_{nl}|\langle n - l \rangle^r  ~.
\]
Similarly, the operator $M$ is defined in terms of its matrix elements
by its action on Fourier modes, $M e^{ilx} = \sum_{j\in\Z} M_{jl} e^{ijx}$. 
 
\begin{theorem} \label{theor-IFT}
[Simple eigenspace perturbation]. 
There exists $\rho > 0$ and a continuous map  
\begin{equation} \label{F-eq1}
    M \in B_\rho(0) \subseteq {\mathcal H}_{r} \rightarrow  
     T \in {\mathcal P}_{r}
\end{equation}
such that $F(T(M),M) = 0$ ~. 
Furthermore, $T(M)$ is analytic with respect to $M$.
For $b \in B_R(0) \subseteq C^1(\T^1)$, the operator $M=M(b)$ is
continuous with respect to $b$, and $M(0)=0$. 
Therefore, there exists $0<\rho_1<R$ such that for 
$b \in B_{\rho_1}(0) \subseteq C^1(\T^1)$,  $M(b) \in B_{\rho}(0)$
and there is a solution $T=T(b)$ of \eqref{F-eq1}.  Furthermore,  
$M$ is analytic in $b$, therefore $T$ is analytic in $b$.
%
\end{theorem}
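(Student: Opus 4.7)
The plan is to apply the analytic implicit function theorem in the Banach spaces of operators introduced above, solving $F(T,M)=0$ for $T$ as a function of $M$, and then composing with the map $b\mapsto M(b)$.

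First I would check that $F:\mathcal{P}_r\times B_\rho(0)\subset\mathcal{H}_r\to\mathcal{Q}_{h^r\to h^{r-1}}$ is well-defined and analytic in a neighborhood of $(0,0)$. Because $T\in\mathcal{P}_r$ is supported only in the $n$th row and column, every power $T^k$ lies in $\mathcal{L}_r$ with $\|T^k\|_r\leq\|T\|_r^k$, so the series $e^{\pm T}=\sum_k(\pm T)^k/k!$ converges absolutely. Combined with the bounds of Proposition~\ref{Prop:3.1} for $M$ and the composition estimate proved just above, this shows that $(T,M)\mapsto e^{-T}(G_\theta+M)e^T$ is an analytic map into $\mathcal{H}_{h^r\to h^{r-1}}$, with $G_\theta$ being the only unbounded ingredient. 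Sandwiching by $P_n$ and $I-P_n$ and adding the conjugate then lands in $\mathcal{Q}_{h^r\to h^{r-1}}$. Since $G_\theta$ is diagonal in the Fourier basis and therefore commutes with $P_n$, one has $F(0,0)=P_n G_\theta(I-P_n)+(I-P_n)G_\theta P_n=0$.

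The main obstacle is to verify that the partial derivative $D_T F(0,0):\mathcal{P}_r\to\mathcal{Q}_{h^r\to h^{r-1}}$ is a bounded linear isomorphism. Expanding $e^{-T}(G_\theta+M)e^T=G_\theta+M+[G_\theta,T]+O(\|T\|_r^2+\|M\|_r\|T\|_r)$ and evaluating at $(T,M)=(0,0)$ yields
\[
D_T F(0,0)[S]=P_n[G_\theta,S](I-P_n)+(I-P_n)[G_\theta,S]P_n.
\]
Because $G_\theta e^{ijx}=g_j(\theta)e^{ijx}$, this operator acts on the matrix elements $S_{nl}$ (with $l\neq n$) simply by multiplication by $g_n(\theta)-g_l(\theta)$, together with the complex-conjugate action on $S_{ln}$. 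Thus invertibility reduces to the spectral separation estimate
\[
|g_n(\theta)-g_l(\theta)|\geq c_0\langle n-l\rangle,\qquad l\neq n,
\]
valid for $\theta$ in the range of Section~\ref{sec-perturb}, which is bounded away from the crossing points $0,\pm\tfrac12$. This estimate follows from the strict monotonicity of $g_j$ in its argument together with the asymptotics $g_j(\theta)\sim|j+\theta|$ as $|j|\to\infty$, after a short case split for small versus large $|n-l|$. The extra power of $\langle n-l\rangle$ that it provides is exactly what converts the target space $\mathcal{Q}_{h^r\to h^{r-1}}$ back into $\mathcal{P}_r$, yielding a bounded inverse.

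With analyticity of $F$ and invertibility of $D_T F(0,0)$ established, the analytic implicit function theorem in Banach spaces \cite{N01} produces $\rho>0$ and a unique analytic map $M\in B_\rho(0)\subset\mathcal{H}_r\mapsto T(M)\in\mathcal{P}_r$ with $T(0)=0$ and $F(T(M),M)=0$. To transfer the result to the bathymetry, I would invoke the convergent Taylor expansion of $G[b]$ in powers of $b$ on $B_R(0)\subset C^1(\T^1)$ obtained in \cite{CGNS05}, together with Proposition~\ref{prop2}, to conclude that $b\mapsto M(b)\in\mathcal{H}_r$ is real-analytic with $M(0)=0$. Choosing $0<\rho_1<R$ small enough that $M(B_{\rho_1}(0))\subset B_\rho(0)$, the composition $b\mapsto T(b):=T(M(b))$ provides the required analytic map and completes the proof.
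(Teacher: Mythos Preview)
Your proposal is correct and follows essentially the same route as the paper: verify the hypotheses of the implicit function theorem for $F$ in the operator spaces $\mathcal{P}_r$, $\mathcal{H}_r$, $\mathcal{Q}_{h^r\to h^{r-1}}$, compute $D_TF(0,0)[S]=P_n[G_\theta,S](I-P_n)+(*)$, invert it via the spectral separation estimate $|g_n(\theta)-g_l(\theta)|\gtrsim\langle n-l\rangle$, and then compose with the analytic dependence $b\mapsto M(b)$ from \cite{CGNS05} and Proposition~\ref{prop2}. The only minor difference is emphasis: the paper's Lemma~\ref{lemma-IFT} spells out Lipschitz continuity of $F$ and $\partial_TF$ via explicit telescoping estimates on $e^{T_1}-e^{T_2}$, whereas you argue analyticity directly from absolute convergence of the exponential series, which is a slightly more economical way to reach the same conclusion.
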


The result is that $e^{T(M)}$ is a unitary transformation that
conjugates the operator $(G_\theta + M)$ to diagonal on the eigenspace
associated with the eigenvalues $\Lambda_{2n-1}(\theta)$ and 
$\Lambda_{2n}(\theta)$.  

The proof of  Theorem \ref{theor-IFT} is  an application of the implicit function
theorem, for which we need to verify the hypotheses. As a starting
point, clearly $F(0,0)=0$ since $G_\theta$ is diagonal in a Fourier
basis. We proceed to verify the relevant properties of the mapping $F$
and its Jacobian derivative $\partial_T F$.  This is the object of
Lemma \ref{lemma-IFT}.  

\begin{lemma}\label{lemma-IFT}
Let $r \geq 1$. 
(i) For $(T,M) \in U$ an open subset of   
$ {\mathcal P}_r   \times {\mathcal H}_r$
containing $(0,0)$, $F(T,M) \in \mathcal{Q}_{h^r\to h^{r-1}}$, 
which is continuous in $M$, and $C^1$ in $T$.  

\noindent 
(ii) The Fr\'echet derivative $\partial_T F(0,0)$ at $(T,M) = 0$ 
is invertible, namely for $N \in \mathcal{Q}_{h^r\to h^{r-1}}$,
there is a unique $V \in \mathcal{P}_r$ such that  
\[
   \partial_T F(0,0)V = N ~.
\]
\end{lemma}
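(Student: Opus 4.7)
The plan is to work entirely in the Fourier-basis matrix representation and to treat the two statements separately.

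For part (i), I first verify that for $T \in \mathcal{P}_r$ with $\|T\|_r$ small the operator exponential $e^T = \sum_{k \ge 0} T^k/k!$ converges in $\mathcal{L}_r$, which follows from the submultiplicativity of the $r$-norm under operator composition established earlier. Expanding $e^{-T}(G_\theta+M)e^T$ as an absolutely convergent series and sandwiching with the projections gives a workable formula for $F_1(T,M)$; since $G_\theta$ is diagonal with symbol $g_j(\theta)=(j+\theta)\tanh(h(j+\theta))$ it sends $h^r$ to $h^{r-1}$, while Proposition~\ref{Prop:3.1} controls $M$ in $\mathcal{H}_r$, so by the composition estimate the resulting operator lies in $\mathcal{H}_{h^r\to h^{r-1}}$. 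The block structure $F_1 = P_n(\cdots)(I-P_n)$ combined with $F_2 = F_1^*$ makes $F$ land precisely in $\mathcal{Q}_{h^r\to h^{r-1}}$. Continuity in $M$ is immediate because $M$ enters $F$ affinely at fixed $T$; $C^1$-regularity in $T$ follows from term-by-term differentiation of the exponential series, justified by the same submultiplicativity argument.

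For part (ii), differentiating at the origin and using that $G_\theta$ is diagonal gives
\[
  \partial_T F(0,0) V = P_n[G_\theta,V](I-P_n) + (I-P_n)[G_\theta,V]P_n,
\]
whose matrix entries in the Fourier basis collapse to the simple products $(g_j(\theta)-g_k(\theta))V_{jk}$. Given $N \in \mathcal{Q}_{h^r\to h^{r-1}}$ supported on the $n$-th row and column, I define
\[
  V_{nl} = \frac{N_{nl}}{g_n(\theta)-g_l(\theta)}, \qquad V_{ln} = -\overline{V_{nl}} \quad (l \ne n),
\]
and $V_{jk} = 0$ otherwise. This candidate is manifestly anti-Hermitian, satisfies the mapping property \eqref{antisym}, and is the only possible choice, giving uniqueness. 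The remaining task is to bound $\|V\|_r$ linearly in $\|N\|_{h^r\to h^{r-1}}$.

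This small-divisor estimate is the main obstacle and the place where the hypothesis on $\theta$ enters. The symbol $g(k)=k\tanh(hk)$ is even, smooth, strictly monotone on each half-line, and satisfies $g'(k)\to 1$ as $|k|\to\infty$; for $\theta \in (\tfrac18,\tfrac38)$ the values $\{g(l+\theta)\}_{l\in\Z}$ are all distinct, and a monotonicity/asymptotic argument yields a lower bound
\[
  |g_n(\theta) - g_l(\theta)| \ge c(n,\theta)\,\langle n-l\rangle, \qquad l \ne n.
\]
Feeding this into the formula for $V_{nl}$ produces one free power of $\langle n-l\rangle^{-1}$; combined with the elementary inequality $\langle l\rangle \le \langle n\rangle\langle n-l\rangle$ and the explicit form of the $\mathcal{H}_{h^r\to h^{r-1}}$ norm restricted to matrices supported on the $n$-th row and column, this gives
\[
  \|V\|_r = \sum_{l \ne n}|V_{nl}|\langle n-l\rangle^r \le C(n,\theta)\,\|N\|_{h^r\to h^{r-1}}.
\]
Hence $\partial_T F(0,0):\mathcal{P}_r \to \mathcal{Q}_{h^r\to h^{r-1}}$ is a bounded bijection, verifying the hypothesis needed by the implicit function theorem in Theorem~\ref{theor-IFT}.
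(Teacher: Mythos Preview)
Your proposal is correct and follows essentially the same route as the paper: the same exponential series estimates for $e^T$ and its derivative to get continuity and $C^1$-regularity in $T$, the affine dependence on $M$, the explicit commutator formula $\partial_T F(0,0)V = P_n[G_\theta,V](I-P_n) + (*)$ and its inversion $V_{nl} = N_{nl}/(g_n(\theta)-g_l(\theta))$, and the same small-divisor bound $|g_n(\theta)-g_l(\theta)| \ge c\langle n-l\rangle$ combined with $\langle l\rangle \lesssim \langle n\rangle\langle n-l\rangle$ to recover $\|V\|_r \le C\|N\|_{h^r\to h^{r-1}}$. The only cosmetic difference is that the paper writes out the Lipschitz estimates for $e^{T_1}-e^{T_2}$ and $(\partial_T e^{T_1}-\partial_T e^{T_2})V$ in full, whereas you invoke submultiplicativity and term-by-term differentiation; the content is the same.
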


\begin{proof}
We will show that $F(T,M)$ has the required smoothness and
invertibility properties as an element of $\mathcal{Q}_{h^r\to h^{r-1}}$, 
in order to invoke the implicit function theorem. Firstly, the
function $F(T,M) \in {\mathcal Q}_{h^r \to h^{r-1}}$ is built of
operators with the following properties: Firstly the operator
$G_\theta$ maps $h^r$ to $h^{r-1}$, and it is diagonal in the Fourier
basis. The operator $e^T \in {\mathcal L}_r$, in fact it is unitary;  
also, $M e^T\in {\mathcal L}_r$. Because 
  $G_\theta \in \mathcal{L}_{h^r\to h^{r-1}}$, then
$G_\theta e^T \in {\mathcal L}_{h^r \to h^{r-1}}.$ We  have 
\[
   e^{-T}(G_\theta + M)e^T \in {\mathcal H}_{h^r \to h^{r-1}} ~,
\]
therefore
\[
     F := P_n[e^{-T}(G_\theta + M)e^T](I-P_n) + (*) \in 
        {\mathcal Q}_{h^r \to h^{r-1}} ~,
\]
where $(*)$ denotes the Hermitian conjugate of the prior
expression. This functional property dictates the choice of sequence
spaces above.   
 
Secondly, we use the series expansion of exponential of operators
\begin{equation}
   e^{T_1} - e^{T_2} = \sum \frac{1}{ m!} ( T_1^m-T_2^m) ~, 
\end{equation}
and furthermore 
\begin{align*}
    T_1^m-T_2^m & = T_1^{m-1} (T_1-T_2) + T_1^{m-2} (T_1-T_2) T_2 
        + .... + (T_1-T_2) T_2^{m-1} \\
      & = \sum_{p=0}^{m-1} T_1^p (T_1-T_2) T_2^{m-p-1} ~.
\end{align*}
Consequently the following operator norm inequality holds: 
\begin{align*} 
  & e^{T_1} - e^{T_2} = \sum_m \frac{1}{ m!} \sum_{p=0}^{m-1} T_1^p
            (T_1 - T_2) T_2^{m-p-1} ~,   \\
  & \|e^{T_1} -e^{T_2}\|_r \le \sum_{m=1}^\infty \frac{1}{(m-1)!}
     a^{m-1} \|T_1-T_2\|_r = e^a \|T_1-T_2\|_r ~,
\end{align*}
where $a = \max (\|T_1\|_r, \|T_2\|_r)$.  

To verify the continuity of $F$ with respect to $T$, compute the difference  
\begin{align}
   F & (T_1,M) - F(T_2,M) \nonumber \\ 
       = & P_n \bigl( e^{-T_1}(G_\theta + M)e^{T_1}
         - e^{-T_2}(G_\theta + M)e^{T_2}\bigr)(I-P_n) + (*) \nonumber \\ 
       = & P_n \bigl( e^{-T_1}G_\theta(e^{T_1} - e^{T_2}) 
        + (e^{-T_1} - e^{-T_2} )G_\theta e^{T_2}\bigr) (I-P_n) \\
    & + P_n \bigl( e^{-T_1} M (e^{T_1} - e^{T_2}) 
        + (e^{-T_1} - e^{-T_2} ) M e^{T_2}\bigr) (I-P_n) + (*) ~,
        \nonumber 
\end{align}
where again $(*)$ is the Hermitian conjugate.
Both $(e^{T_1} - e^{T_2})$ and $e^{T_2}$ are bounded in the operator
$r$-norm and since $r \geq 1$ thus
\begin{align*}
  \| e^{-T_1}G_\theta(e^{T_1} - e^{T_2})\|_{{h^r\to h^{r-1}}}  
    & \leq \|e^{-T_1}G_\theta\|_{{h^r\to h^{r-1}}}e^a
       \|T_1-T_2\|_r   \\
   & \leq \|e^{-T_1}\|_r \|G_\theta\|_{{h^r\to h^{r-1}}}
         e^a\|T_1-T_2\|_r ~,
\end{align*}  
which is the result of Lipschitz continuity. Similarly
\begin{align*}
   \|(e^{-T_1} - e^{-T_2} )G_\theta e^{T_2}\|_{{h^r\to h^{r-1}}}
  & \leq e^a\|T_1-T_2\|_r \|G_\theta\|_{{h^r\to h^{r-1}}}
          \|e^{-T_2}\|_r  ~.
\end{align*} 
Analogous estimates hold for the term involving $M$; 
\begin{align*}
  & \| e^{-T_1} M (e^{T_1} - e^{T_2}) 
        + (e^{-T_1} - e^{-T_2} ) M e^{-T_2}\|_{{h^r\to h^{r-1}}}  \\
    & \qquad \leq e^a\|M\|_r\bigl(\|e^{-T_1}\|_r + \|e^{-T_2}\|_r \bigr)
          \|T_1-T_2\|_r ~.
\end{align*}
The operator $F(T,M)$ is affine linear in $M$, thus the estimate of
continuity is even more straightforward, namely
\begin{align*}
   \|F(T,M_1) - F(T,M_2)\|_{{h^r\to h^{r-1}}} 
     & = \|P_n(e^{-T}(M_1 - M_2)e^{T})(I-P_n) + (*) \|_{h^r\to h^{r-1}}  \\
     & \leq C\|M_1 - M_2\|_r ~.
\end{align*}

We now address the continuity of the Fr\'echet derivative of $F$ with
respect to $T$. For this purpose, we write 
\begin{equation*}
\partial_T F(T,M)  V = P_n (\partial_T  e^{-T}) V (G_\theta+M) e^T  (I-P_n) +
P_n e^{-T} (G_\theta+M) (\partial_T e^{T}) V (I-P_n) +(*)
\end{equation*}
and compute
\begin{equation*}
   (\partial_T e^T) V= \sum_{m=1}^\infty \frac{1}{m!} \sum_{p=0}^{m-1} T^p VT^{m-p-1}.
\end{equation*}
\begin{align}\label{frechet}
   & (\partial_{T_1} e^{T_1}) V - (\partial_{T_2} e^{T_2}) V = 
     \sum_{m=1}^\infty \frac{1}{m!} \sum_{p=0}^{m-1} (T_1^p VT_1^{m-p-1} - T_2^p VT_2^{m-p-1}) \\
   & = \sum_{m=1}^\infty \frac{1}{m!}  \Big(\sum_{p=1}^{m-1} (T_1^p -T_2^p) V T_1^{m-p-1} \Big)
     + \sum_{p=0}^{m-2} T_2^p V (T_1^{m-p-1}- T_2^{m-p-1}) ~. \nonumber 
\end{align}
Let  $a = \max (\|T_1\|_r, \|T_2\|_r)$,  then 
$\|T_1^p-T_2^p\|_r \le p a^{p-1} \|T_1-T_2\|_r$,
\begin{align*}
  & \| \sum_{p=0}^{m-1}   (T_1^p VT_1^{m-p-1} - T_2^p VT_2^{m-p-1})\|_r \le 
  & \sum_{p=0}^{m-1} p a^{p-1}  \|T_1-T_2\|_r  a^{m-p-1}\|V\|_r ~.
\end{align*}
Thus the first term in the rhs of \eqref{frechet} is bounded by 
\begin{align*}
    & \|T_1-T_2\|_r \|V\|_r \sum _m \frac{1}{m!} a^{m-2} \sum_{p=1}^{m-1} p 
    = \|T_1-T_2\|_r \|V\|_r \sum _m \frac{1}{m!} a^{m-2} \frac{m(m-1)}{2}  \\
    & \qquad  =\frac{1}{2} e^a \|T_1-T_2\|_r \|V\|_r ~.
 \end{align*}
The second term of the rhs of \eqref{frechet} has a similar bound. We
combine this estimate with the bounds on $G_\theta$ and $M$ to obtain
the continuity of $\partial_T F(T,M)$ with respect to $T$. 

We now verify that $\partial_T F(0,0)$ is invertible as a mapping 
of linear operators from $\mathcal{P}_r$ to 
$\mathcal{Q}_r$ that satisfy~\eqref{antisym}. 
Since $\partial_T e^T|_{T=0}  V = V$, then
\[
   \partial _T F(0,0) V = P_n [G_\theta,V] (I-P_n) +(*).
\]
Posing the equation 
\begin{equation}\label{Eqn:Commutator}
   \bigl(P_n [G_\theta,V] (I-P_n) +(*) \bigr)_{\pm nl} = N_{ nl} ~, \qquad l\ne  n ~,
\end{equation} 
given $N \in  {\color{blue} \mathcal{Q}_{h^r \to h^{r-1}}  }$ 
we solve for $V \in \mathcal{P}_r$.

We have an explicit inverse
expression for $V$ from the fact that $G_\theta$ is diagonal, namely
\begin{align*}
   V_{jk}  =
   \begin{cases}
   &  \displaystyle{\frac{N_{jn}}{g_\theta(j) - g_\theta( n)} }~,
  \quad j \not=  n ~, \quad k =  n ~, \\
   & \displaystyle{ \frac{N_{ n k}}{g_\theta( n) - g_\theta(k)}} ~,
  \quad j =  n ~, \quad k \not=  n ~,  \\
   &  0   \qquad\qquad\qquad \quad \hbox{\rm otherwise} ~.
   \end{cases}
\end{align*}
The support properties of $V$ follow from this expression, as does the
anti-Hermitian property. Because of eigenvalue separation for $j,k \not=  n$ 
and because of the linear growth property of the dispersion relation $g_\theta(k)$,  
we have a lower bound on the denominator
\[
    |g_\theta(j) - g_\theta( n)| \geq \frac{1}{C} \langle j - n \rangle ~.
\]
The fact that 
$V \in {\mathcal P}_r \subseteq {\mathcal A}_r$ follows from an
estimate of the $r$-operator norm.
\begin{align}
   \| V \|_r & = \sup_j \sum_k |V_{jk}|\langle j - k \rangle^r
   \nonumber \\
      & = \max \Bigl( \sup_{j \not= n} 
          |V_{jn}|\langle j - n \rangle^r ~, ~
          \sum_{k \not=  n}
          |V_{ nk}|\langle n - k \rangle^r\Bigr)  \label{Eqn:V-Est}\\
      & \leq C \max \Bigl( \sup_{j \not=  n} 
          |N_{jn}|\langle j - n \rangle^{r-1} ~, ~
          \sum_{k \not=  n}
          |N_{ nk}|\langle n - k \rangle^{r-1}\Bigr) ~.\nonumber
\end{align}
To finish the argument we observe that, for fixed $n$, the quantity
$\langle j - n \rangle$ has a lower bound
\[
   \langle j - n \rangle \geq C_1(n)  \langle j \rangle ~.  
\]
Therefore, in the expression \eqref{Eqn:V-Est}, we have
\begin{align*}
  \| V \|_r & \leq C \max \Bigl( \sup_{j \not=  n} 
          |N_{jn}|\langle j - n \rangle^{r}
            \frac{1}{\langle j - n \rangle} ~, ~
           \sum_{k \not=  n}
          |N_{ nk}|\langle n - k \rangle^{r}
            \frac{1}{\langle k - n \rangle} \Bigr) \\
    & \leq C \max \Bigl( \sup_{j \not= n} 
          |N_{jn}|\frac{1}{\langle j \rangle}
             \langle j - n \rangle^{r}~,  ~
          \sum_{k \not= n}
          |N_{ nk}|\frac{1}{\langle k \rangle} 
             \langle n - k \rangle^{r} \Bigr)  \\
    & \leq C\|N\|_{{h^r \to h^{r-1}}} ~.
\end{align*}
This proves that 
$(\partial_T F(0,0))^{-1}: {\mathcal Q}_{{h^r \to h^{r-1}}} \to
{\mathcal P}_r$ is bounded.
\end{proof} 
 
Choosing the operator $T$ as in Theorem \ref{theor-IFT}, we now have
the decomposition 
\begin{align}\label{decomp3}
   e^{-T} (G_\theta + M) e^T = & P_n e^{-T} (G_\theta + M) e^{T}P_n \\
  & + (I-P_n) e^{-T} (G_\theta + M) e^{T}(I-P_n) ~ \nonumber
\end{align}
in which the $n$th row and column are both identically zero except
for the coefficient $\Lambda_{2n}(\theta)$ on the diagonal. Therefore 
\[
   e^{-T} (G_\theta + M) e^{T} \delta_{n} 
     = \Lambda_{2n} (\theta)\delta_{n} ~,
\]
where $\delta_n $ is the Kronecker symbol, $\delta_n(j) =0$ $j\ne n, \delta_n(n) =1$.
Since $P_n$ is the projection on the mode $e^{inx}$, an expression
for the eigenfunction in space variables is given by 
$\psi_{2n}(x,\theta) = \sum_{l \in \Z} (e^{T})_{ln}e^{ilx}$, so that
\begin{equation}
   (G_\theta + M)\psi_{2n}(x,\theta) = \Lambda_{2n}\psi_{2n}(x,\theta) ~.
\end{equation}
Furthermore, the transformation $e^{T}$ is unitary, so that
$\|\psi_{2n}(x,\theta)\|_{\ell^2} = 1$. That is, the function
$\psi_{2n}(x,\theta)$ is the normalized eigenfunction of the operator
$(G_\theta + M) $ associated to the eigenvalue $\Lambda_{2n}(\theta)$. 
In addition, since $\|T\|_r < +\infty$, the Fourier series expansion of
the eigenfunction is localized close to the exponential $e^{inx}$, in
that $\delta_n - \hat{\psi}_{2n}(l) \leq C_n\langle l \rangle^{-r}$. 
This is the expression for eigenfunctions and associated eigenvalues
in the case of simple spectrum.  The analysis of $\psi_{2n-1} $ is identical.

\subsection{Perturbation of double eigenvalues} \label{sec-double}

The ground state eigenvalue $\Lambda_0(\theta)$ is simple in the interval
$-\frac{3}{8}<\theta<\frac{3}{8}$ and therefore is also governed by
the general theory of self adjoint operators, as mentioned above, or
alternatively by our construction of the previous section.   

For $1\le n \in \N$, we focus on the $n^{th}$ spectral gap. We perform
a perturbation analysis of spectral subspaces associated with double
or near-double eigenvalues $\Lambda_{2n-1}(\theta)$ and
$\Lambda_{2n}(\theta)$. Denote $P_n$ the projection on the subspace
of $L^2(\T^1)$ spanned by $\{e^{inx}, e^{-inx} \}$ and define the
function $F(T,M)$ as in \eqref{funct-F} and \eqref{funct-F1-2}. 
Analogously, define the space of operators respecting the
tw0-dimensional range of the projection $P_n$ to be ${\mathcal P}_r$  
and ${\mathcal Q}_{h^r \to h^{r-1}}$.
 
\begin{theorem} \label{theor-IFT-2}
 [Perturbation of eigenspaces of double eigenvalues].
 There exists $\rho > 0$ and a continuous map  
\begin{equation} \label{F-eq2}
    M \in B_\rho(0) \subseteq {\mathcal H}_{r} \rightarrow  
     T \in {\mathcal P}_{r}
\end{equation}
such that $F(T(M),M) = 0$ ~. 
Furthermore, $T(M)$ is analytic with respect to $M$.
For $b \in B_R(0) \subseteq C^1(\T^1)$, the operator $M=M(b)$ is
continuous with respect to $b$, and $M(0)=0$. 
Therefore, there exists $0<\rho_1<R$ such that for 
$b \in B_{\rho_1}(0) \subseteq C^1(\T^1)$, $M(b) \in B_{\rho}(0)$ and
there is a solution $T=T(b)$ of \eqref{F-eq2}.  Furthermore, $M$ is
analytic in $b$, therefore $T$ is analytic in $b$. 
\end{theorem}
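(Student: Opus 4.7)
The plan is to apply the analytic implicit function theorem to the equation $F(T,M) = 0$ in exactly the same functional framework used for Theorem~\ref{theor-IFT}; the only structural change is that $P_n$ now denotes the rank-two orthogonal projection onto $\mathrm{span}\{e^{inx}, e^{-inx}\}$. Correspondingly, $\mathcal{P}_r$ and $\mathcal{Q}_{h^r \to h^{r-1}}$ are re-defined as the subspaces of anti-Hermitian (respectively Hermitian) operators supported in the four strips that exchange $P_n L^2$ with its orthogonal complement. First I would verify the two hypotheses of Lemma~\ref{lemma-IFT} in this new setting: (i) that $F$ is continuous, in fact analytic, in $M$ and $C^1$ in $T$, with values in $\mathcal{Q}_{h^r \to h^{r-1}}$; and (ii) that the Fréchet derivative $\partial_T F(0,0) : \mathcal{P}_r \to \mathcal{Q}_{h^r \to h^{r-1}}$ is invertible.

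Part (i) carries over verbatim from the simple-eigenvalue case. The operator-norm bounds on $e^{T_1} - e^{T_2}$, on $\partial_T e^T$, on $M$ via Proposition~\ref{Prop:3.1}, and the fact that $G_\theta \in \mathcal{L}_{h^r \to h^{r-1}}$, do not depend on the rank of $P_n$; the telescoping estimates for $F(T_1,M) - F(T_2,M)$ and for the continuity of $\partial_T F(T,M)$ go through unchanged.

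The main new step, and the principal obstacle, is (ii). Computing $\partial_T F(0,0) V = P_n [G_\theta, V](I - P_n) + (I - P_n)[G_\theta, V] P_n$ and exploiting the diagonality of $G_\theta$, I would solve the commutator equation entrywise by
\[
   V_{jk} = \frac{N_{jk}}{g_\theta(j) - g_\theta(k)}
\]
whenever exactly one of $j, k$ lies in $\{\pm n\}$, and set $V_{jk} = 0$ otherwise. The crucial observation is that the denominator never involves the potentially small difference $g_\theta(n) - g_\theta(-n)$, which vanishes at $\theta = 0$; one only needs the uniform bound $|g_\theta(\pm n) - g_\theta(j)| \geq C^{-1}\langle j \mp n \rangle$ for $j \notin \{\pm n\}$. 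For $\theta$ in the interval around $0$ (and symmetrically around $\pm 1/2$) specified in Theorem~\ref{thm2}, the pair $\{g_\theta(n), g_\theta(-n)\}$ is isolated from all other dispersion values, and the linear-growth behaviour of $g_\theta(k) \sim |k|/2$ yields the required bound. The resulting $V$ is anti-Hermitian (since $N$ is Hermitian and the denominator is antisymmetric in $j, k$) and has the correct support structure, and the operator-norm estimate $\|V\|_r \leq C\|N\|_{h^r \to h^{r-1}}$ follows exactly as in~\eqref{Eqn:V-Est}, using the same reduction $\langle j - n\rangle \geq C_1(n)\langle j\rangle$ together with the analogous inequality for $-n$.

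With both hypotheses in place, the analytic implicit function theorem produces a map $M \in B_\rho(0) \subseteq \mathcal{H}_r \mapsto T(M) \in \mathcal{P}_r$ with $T(0) = 0$, $F(T(M),M) = 0$, and $T$ analytic in $M$. Composing with the analytic dependence $b \mapsto M(b)$ guaranteed by the Taylor expansion of $G[b]$ in $b$ (cf.~\cite{CGNS05} and Proposition~\ref{Prop:3.1}), one then obtains $T(b)$ analytic in $b$ on a smaller ball $B_{\rho_1}(0) \subseteq C^1(\T^1)$, completing the proof. The unitary conjugation $e^{T(M)}$ reduces $G_\theta + M$ to block-diagonal form with a genuine $2\times 2$ block on the $\{\pm n\}$ modes, whose explicit diagonalization, as in the finite-dimensional model of Section~\ref{model}, then produces the two eigenvalues $\Lambda_{2n-1}(\theta)$ and $\Lambda_{2n}(\theta)$.
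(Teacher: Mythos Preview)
Your proposal is correct and follows essentially the same approach as the paper: both argue that Lemma~\ref{lemma-IFT} carries over unchanged except for the invertibility of $\partial_T F(0,0)$, which is then established by the explicit formula $V_{(\pm n)l} = N_{(\pm n)l}/(g_{\pm n}(\theta) - g_l(\theta))$ together with the lower bound $|g_{\pm n}(\theta) - g_l(\theta)| \geq \alpha\langle l \mp n\rangle$ for $l \ne \pm n$. Your explicit remark that the small divisor $g_\theta(n) - g_\theta(-n)$ never appears is in fact the key point, and the paper treats it only implicitly by restricting the formula to $l \ne \pm n$.
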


The proof of this theorem  is similar to the one presented for a
simple eigenvalue and relies on the implicit function theorem. We
present only the steps in the proof of Lemma~\ref{lemma-IFT}   that need
to be modified. 
 
The operator $T$ is such that $T^* = -T$, this is to say 
$T_{\pm n l } = - \overline{T}_{l (\pm n)}$, and the mapping property
\eqref{antisym} of $T$, which is expressed in terms of the  Fourier series
basis elements as the properties that 
$T e^{ \pm inx} = \sum_{l\ne \pm n} T_{ l (\pm n)} e^{ilx}$, and for 
$l\ne \pm n$ then $T  e^{  ilx} = \sum_{\pm n} T_{(\pm n) l}  e^{\pm i n x}$. 
In particular, the nonzero entries of $T_{\pm n l}$ are only for
indices $\pm n,l \ne n$. 
The operator
norm is defined in the usual way as 
\[
    \|T\|_r = \sup_{\pm n} 
      \Bigl(\sum_{l} |T_{(\pm n) l}|\langle l \mp n\rangle^r \Bigr) ~.
\]
When checking the hypotheses for the application of the implicit
function theorem, the only point that is necessary to be verified is
that the operator $\partial_T F(0,0)$ is invertible. To this aim, 
the solution of equation~\eqref{Eqn:Commutator} in this case is explicitly 
\[
    V_{(\pm n)l} =  \frac{N_{(\pm n) l}}{ g_{\pm n}(\theta) - g_l(\theta)}
    ~
\]
for all $ l \ne \pm n$. Since for $-\frac{3}{8}<\theta<\frac{3}{8}$ 
and for $l \ne \pm n$ there is a lower bound on the rhs, namely
\[
     |{g_{\pm n}(\theta) - g_l(\theta)}| > \alpha \langle l \mp n \rangle ~,
\]
for any integer $l\ne \pm n$.
It is again clear that for 
$N \in {\mathcal Q}_{h^r \to h^{r-1}} \subseteq {\mathcal H}_{h^r \to h^{r-1}}$,
 the solution $V$ gives an operator that satisfies 
$V \in {\mathcal P}_r \subseteq {\mathcal A}_r$. 

We note that the eigenvalues themselves are not analytic in $b$, at
least not uniformly in $b$ and in $\theta$ in neighborhoods of double
points. This is exhibited in the model of Section~\ref{sec-opening}. 
But the spectral subspace  spanned by $\{\psi_\pm \}$is analytic, a
common state of affairs in the theory of eigenvalue perturbation in
the case of spectrum with varying multiplicity.  


\section{Perturbation theory of spectral gaps}\label{sec-perturbation}


In this section, we provide a perturbative calculation of the gap
created near $\theta=0$ between the eigenvalues
$\Lambda_{2n-1}(\theta)$ and $\Lambda_{2n}(\theta)$ in the presence of
periodic bathymetry $b(x)$. For clarity of the perturbation
computation, we make the substitution $b(x) = \varepsilon \beta(x)$,
and continue the discussion as a perturbation calculation in $\varepsilon$. 

The first step is to calculate the operator $T$ perturbatively by solving the equation
\[
   P_n e^{-T} (G_\theta+M) e^T (I-P_n)  +(*) = 0 ~. 
\]
Following this, we calculate the eigenvalues $\Lambda_{2n-1}(\theta)$
and $\Lambda_{2n}(\theta)$  of the  $2\times 2$ matrix 
$P_n e^{-T}(G_\theta+M) e^T P_n$ and their corresponding eigenfunctions  
$\Psi_{2n-1}, \Psi_{2n}$. Our calculation provides the eigenfunctions
$\psi_{2n-1} = e^T \Psi_{2n-1}$ and $\psi_{2n} = e^T \Psi_{2n}$ of 
$G_\theta + M$ that are associated to $\Lambda_{2n-1}(\theta)$ and
$\Lambda_{2n}(\theta)$.

\subsection{Expansion of the operator $T$}

Using the Taylor expansion of the Dirichlet -- Neumann operator in powers
of the bottom variations as calculated in \cite{CGNS05}, write
\[
   M = \sum_{p\ge1} \varepsilon^p M_p ~.
\]
We seek the anti-Hermitian operator $T$ in the form 
\[
   T= \sum_{p\ge 1} \varepsilon^p T_p ~.
\]

\begin{proposition}
The coefficients $T_p$ in the series expansion of $T$ are given recursively.
\end{proposition}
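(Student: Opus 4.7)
The plan is to substitute the Taylor expansions $T = \sum_{p\ge 1}\varepsilon^p T_p$ and $M = \sum_{p \ge 1}\varepsilon^p M_p$ into the equation
\[
F(T,M) = P_n\, e^{-T}(G_\theta + M)\, e^T (I-P_n) + (*) = 0,
\]
and then match powers of $\varepsilon$. To handle the exponentials I would use the adjoint expansion
\[
e^{-T}(G_\theta + M) e^{T} = \sum_{k\ge 0}\frac{(-1)^k}{k!}\,\mathrm{ad}_T^{k}(G_\theta + M),
\]
where $\mathrm{ad}_T X := [X,T]$. Substituting the series for $T$ and collecting powers of $\varepsilon$ produces, at each order $\varepsilon^p$, an expression in which the only term containing the unknown $T_p$ is the single commutator $-[G_\theta, T_p]$, while all remaining contributions are multilinear in the already-known $T_1,\ldots,T_{p-1}$ and $M_1,\ldots,M_p$.

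At order $\varepsilon$ one obtains
\[
P_n\bigl[G_\theta, T_1\bigr](I-P_n) + (*) = P_n M_1 (I-P_n) + (*),
\]
which by the proof of Lemma~\ref{lemma-IFT} has the explicit solution $(T_1)_{jn} = -(M_1)_{jn}/(g_\theta(j) - g_\theta(n))$ for $j\neq n$, extended anti-Hermitically (with the analogous indexing at $\pm n$ in the double-eigenvalue setting of Theorem~\ref{theor-IFT-2}). Assuming $T_1,\ldots,T_{p-1}$ have been constructed, the order-$\varepsilon^p$ equation takes the form
\[
P_n\bigl[G_\theta, T_p\bigr](I-P_n) + (*) = R_p(M_1,\ldots,M_p;\, T_1,\ldots,T_{p-1}),
\]
where $R_p$ is the $\varepsilon^p$-coefficient of $-\sum_{k\ge 1}\frac{(-1)^k}{k!}\mathrm{ad}_T^{k}(G_\theta + M) - (M - \varepsilon^p M_p)$, projected by $P_n(\,\cdot\,)(I-P_n) + (*)$. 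Since the Fr\'echet derivative $\partial_T F(0,0)$ is precisely the map $V \mapsto P_n[G_\theta, V](I-P_n) + (*)$ and was shown invertible in Lemma~\ref{lemma-IFT}, the recursion step is well posed and yields $T_p$ via the explicit matrix-element formula from that lemma.

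The main technical point to watch is the combinatorial bookkeeping of $R_p$: one must organize it as a sum over compositions $(p_1,\ldots,p_k)$ with $p_1+\cdots+p_k = p$, or $p_1+\cdots+p_k+q = p$ when an $M_q$ factor appears, of nested commutators of the form $\mathrm{ad}_{T_{p_1}}\cdots\mathrm{ad}_{T_{p_k}}(G_\theta)$ and $\mathrm{ad}_{T_{p_1}}\cdots\mathrm{ad}_{T_{p_k}}(M_q)$ with the appropriate combinatorial weights from the Lie series. This collection of formulas is the recursion announced in the proposition. To close the argument I would then verify convergence of $T = \sum \varepsilon^p T_p$ in $\|\cdot\|_r$ using the uniform bound $\|M_p\|_r \le C^p$ (inherited from the $b$-analyticity asserted in Proposition~\ref{Prop:3.1}) together with the exponential estimate $\|e^{T_1}-e^{T_2}\|_r \le e^{a}\|T_1-T_2\|_r$ proved in Lemma~\ref{lemma-IFT}, which yields a geometric majorant on $\|T_p\|_r$ and reconciles the construction with the analyticity already guaranteed by Theorem~\ref{theor-IFT}.
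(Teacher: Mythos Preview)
Your approach is essentially the same as the paper's: both expand $e^{-T}(G_\theta+M)e^T$ in powers of $\varepsilon$, observe that $T_p$ appears only through $[G_\theta,T_p]$, and invert that commutator using the explicit matrix formula from Lemma~\ref{lemma-IFT}. The paper organizes the bookkeeping by writing $e^{\pm T} = I + \sum_{s\ge 1}\varepsilon^s T_\pm^{(s)}$ with $T_\pm^{(s)} = \pm T_s + h_\pm^{(s)}(T_1,\dots,T_{s-1})$ and multiplying out the triple product directly, whereas you use the adjoint (BCH-type) expansion in nested commutators; either packaging produces the same recursion, and your version arguably makes the isolation of $[G_\theta,T_p]$ more transparent.

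One small inconsistency to fix: with your convention $\mathrm{ad}_T X := [X,T]$ one has $e^{-T}Xe^T = \sum_{k\ge 0}\frac{1}{k!}\,\mathrm{ad}_T^k X$, not $\sum_k \frac{(-1)^k}{k!}\,\mathrm{ad}_T^k X$; correspondingly the order-$\varepsilon$ identity should read $P_n[G_\theta,T_1](I-P_n) = -P_n M_1(I-P_n)$ (as in the paper), which is in fact what your explicit formula $(T_1)_{jn} = -(M_1)_{jn}/(g_\theta(j)-g_\theta(n))$ already reflects. Also, the convergence discussion you append is not needed for this proposition, which is a purely formal statement; analyticity is already secured by Theorem~\ref{theor-IFT}.
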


\begin{proof}
Writing the exponentials $e^{\pm T}$ in terms of a formal expansion in
$\varepsilon$,  
\[
    e^T = I+\sum_{s\ge1} \varepsilon^s T_+^{(s)}, \qquad 
    e^{-T} = I+\sum_{s\ge1} \varepsilon^s T_-^{(s)} ~,
\]
we have $T_\pm^{(s)}$ computed in terms of $T_p$ as
\begin{equation}
   T_\pm^{(s)} = \sum_{j=1}^s \frac{(\pm  1)^j}{j!} 
       \Bigl( \sum_{p_1+..+p_j=s  \atop 
        1\le p_1, \cdots,p_j \le s-(j-1) } 
       T_{p_1}  \cdots T _{p_j} \Bigr)~.
\end{equation}
In particular
\begin{align}\label{T-s}
   T_\pm^{(s)} &=  \pm T_s + \sum_{j=2}^s \frac{(\pm  1) ^j}{j!} 
      \Bigl( \sum_{p_1+..+p_j=s  \atop 
      1\le p_1,...,p_j \le s-(j-1) } T_{p_1}\cdots T _{p_j}  \Bigr)\\
  & := \pm T_s + h_\pm^{(s)}(T_1,\cdots,T_{s-1})~.  \nonumber
\end{align}
Expand the product 
\begin{align} \label{expansion-full}
  e^{-T}(G_\theta & + M) e^T = (I + \sum_{s\ge 1} \varepsilon^s T_-^{(s)}) 
    (G_\theta +\sum_{p\ge 1} \varepsilon^p M_p) (I+\sum_{l\ge 1} 
    \varepsilon^l  T_+^{(l)}) \nonumber \\
   = & G_\theta + \sum_{q\ge 1} \varepsilon^q  
         ( T_-^{(q)} G_\theta  + G_\theta T_+^{(q)} )
       + \sum_{q \ge 1}    \varepsilon^q  M_q  \\
     & \quad + \sum_{q\ge 2} \varepsilon^q  
         \sum_{q_1+q_3 =q \atop 1\le q_1, q_3 < q} T_-^{(q_1)} G_\theta T_+^{(q_3)}   
       + \sum_{q\ge 2} \varepsilon^q 
         \sum_{ q_1+q_2 + q_3 =q \atop 0\le q_1, q_3 < q; 1\le q_2<q } 
         T_-^{(q_1)} M_{q_2} T_+^{(q_3)}~.  \nonumber 
\end{align}
Apply the projections $P_n$ and $I-P_n$ to the right and left  of
\eqref{expansion-full} respectively,  and set to zero term of order
$q$ ($q\ge1$), 
\begin{align} \label{eq-T}
  & P_n( T_-^{(q)} G_\theta  + G_\theta T_+^{(q)})(I-P_n) = - P_n M_q (I-P_n)  
     \nonumber \\
  & - P_n \Big( \sum_{q_1+q_3 =q \atop 1\le q_1, q_3 < q } 
      T_-^{(q_1)} G_\theta T_+^{(q_3)} 
    + \sum_{ q_1+q_2 + q_3 =q \atop 0\le q_1, q_3 < q; 1\le q_2<q } 
      T_-^{(q_1)} M_{q_2} T_+^{(q_3)} \Big) (I-P_n) ~.
\end{align}
Isolating $T_q$ in the lhs, we get
\begin{align} \label{eq-forT-q}
  & P_n  [G_\theta, T_q ] (I-P_n) =  -  P_n M_q (I-P_n) -
    P_n  ( h_-^{(q)} G_\theta  + G_\theta h_+^{(q)} ) (I-P_n)
    \nonumber \\
  & - P_n \Big( \sum_{q_1+q_3 =q \atop 1\le q_1, q_3 \le q } 
        T_-^{(q_1)} G_\theta T_+^{(q_3)}   + 
        \sum_{ q_1+q_2 + q_3 =q \atop 0\le q_1, q_3 < q, \ 1\le q_2<q } 
       T_-^{(q_1)} M_{q_2} T_+^{(q_3)} \Big) (I-P_n) ~.
\end{align}
The latter equation for $T_q$ has the form 
\[
   P_n  [G_\theta, T_q ] (I-P_n)  = P_n N_q(I-P_n) ~,
\]
where the rhs is defined to be the rhs of \eqref{eq-forT-q}, it depends
on all the previous $T_p$, $1\le p\le q-1$, and its solution is given
explicitly by
\begin{equation}\label{T-q}
   (T_q)_{(\pm n) l} = \frac{(N_q)_{(\pm n) l}}{g_{\pm n} (\theta) -
    g_l(\theta) } ~,  \,  \ell\ne \pm n,
\end{equation}
with the property that $(T_q)_{l(\pm n)} = - (\overline{T_q})_{(\pm n) l}$ 
and $T_{jk}=0 $ otherwise. In particular, for $q=1$, we have
\begin{align}\label{T1-formula}
   P_n  [G_\theta, T_1 ] (I-P_n)  &=  - P_n M_1(I-P_n) \\
  (I-P_n)  [G_\theta, T_1 ] P_n  &=  - (I- P_n) M_1P_n ~. \nonumber
\end{align}
\end{proof}

\begin{lemma} \label{lemma4-2}
The matrix coefficients of the operator $M_p$ are polynomials of order
$p$ in  $\beta_k$, the Fourier coefficients of $\beta$, 
$$(M_p)_{jl} = \sum_{k_j\in \Z} m_p(k_1,....,k_p) \beta_{k_1} .....\beta_{k_p}$$
where $m_p(k_1,...,k_p) $ is nonzero only when $k_1+...+ k_p=j-l$. 

The matrix coefficients of the operator $T_p$ satisfy a similar constraint
$$(T_p)_{jl} = \sum_{k_j\in \Z} t_p(k_1,...,k_p) \beta_{k_1} ...\beta_{k_p}$$
where $t_p(k_1,...,k_p) $ is nonzero only when $k_1+...+ k_p=j-l$. 
\end{lemma}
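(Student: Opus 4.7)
The plan is to introduce a class of operators indexed by order and characterized precisely by the shift constraint in the lemma, and then to verify that (a) this class is closed under composition, (b) $G_\theta$ lies in the zero-order class, and (c) the operators $M_p$ and $T_p$ lie in the order-$p$ class. Define $\mathcal{C}_p$ to be the set of operators $A$ whose matrix elements admit an expansion
\[
   A_{jl} = \sum_{k_1,\ldots,k_p \in \Z} a(k_1,\ldots,k_p)\,\beta_{k_1}\cdots\beta_{k_p}
\]
with $a(k_1,\ldots,k_p) = 0$ unless $k_1+\cdots+k_p = j-l$. The key closure property is that if $A \in \mathcal{C}_p$ and $B \in \mathcal{C}_q$, then $AB \in \mathcal{C}_{p+q}$: indeed, $(AB)_{jl} = \sum_m A_{jm}B_{ml}$, and the two shift constraints $k_1+\cdots+k_p = j-m$ and $k_{p+1}+\cdots+k_{p+q} = m-l$ add to give $k_1+\cdots+k_{p+q} = j-l$, independently of $m$. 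Moreover any diagonal operator such as $G_\theta$, which acts as $G_\theta e^{ijx} = g_j(\theta)e^{ijx}$, lies in $\mathcal{C}_0$.

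For the statement about $M_p$, I would invoke the convergent Taylor expansion of the Dirichlet -- Neumann operator in powers of $b$ from \cite{CGNS05}, so that $M_p$ is the $p$-th Taylor coefficient in the direction $\beta$ and is therefore $p$-linear in $\beta$. The shift constraint follows from translation covariance: replacing $\beta(x)$ by $\beta(x+s)$ multiplies each Fourier coefficient $\beta_k$ by $e^{iks}$, while simultaneously $M_p$ transforms into its conjugate by the translation operator, so that $(M_p)_{jl}$ is multiplied by $e^{i(j-l)s}$. Matching powers of $e^{is}$ forces the coefficient of $\beta_{k_1}\cdots\beta_{k_p}$ to vanish unless $k_1+\cdots+k_p = j-l$. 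Alternatively, one can read the shift structure directly from the integral kernels defining $A[b]$, $B[b]$, and $\widetilde G[-h+b]$ in \eqref{operatorsAB}, where each occurrence of $b$ (or $\partial_x b$) enters as a multiplication operator contributing a single Fourier convolution.

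For $T_p$, I would argue by induction on $p$. The base case $p=1$ follows from the explicit formula \eqref{T1-formula}, which yields $(T_1)_{(\pm n)l} = -(M_1)_{(\pm n)l}/(g_{\pm n}(\theta) - g_l(\theta))$; the denominator is $\beta$-independent, so $T_1 \in \mathcal{C}_1$ inherits the shift structure from $M_1$. For the inductive step, assume $T_q \in \mathcal{C}_q$ for all $q < p$. By \eqref{T-s}, each $T_\pm^{(s)}$ is a linear combination of products $T_{p_1}\cdots T_{p_j}$ with $p_1+\cdots+p_j = s$, and the closure property places it in $\mathcal{C}_s$. Every term on the right-hand side of \eqref{eq-forT-q} is a product of such operators with $G_\theta$ and some $M_{q_2}$ whose orders sum to $p$, so the right-hand side $N_p$ lies in $\mathcal{C}_p$. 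Finally, the inversion formula \eqref{T-q} divides $(N_p)_{(\pm n)l}$ by the $\beta$-independent factor $g_{\pm n}(\theta) - g_l(\theta)$, which does not alter the polynomial structure in $\beta$. This completes the induction.

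The delicate point is the $M_p$ assertion itself. One must be confident that the Taylor expansion inherits translation covariance term by term; this is immediate from the uniqueness of Taylor coefficients, but deserves a careful verification given that $M$ is assembled from the nonlocal operators $A[b]$, $B[b]$, and $\widetilde G[-h+b]$. Once this is in hand, the closure/induction argument for $T_p$ is essentially bookkeeping, since operator norms play no role and only the Fourier-shift structure needs to be tracked.
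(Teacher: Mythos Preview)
Your proposal is correct and, for $T_p$, follows exactly the paper's approach: induction on the recursive formulas \eqref{T-s} and \eqref{T-q}, using that the right-hand side of \eqref{eq-forT-q} is built from lower-order $T_q$'s, $M_q$'s, and $G_\theta$ via compositions.

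For $M_p$ the paper simply invokes the recursion formula for the Dirichlet -- Neumann expansion in \cite[Appendix]{CGNS05}, which expresses each $M_p$ as a finite sum of compositions of Fourier multipliers with $p$ interspersed multiplications by $\beta$; your closure property for the classes $\mathcal{C}_p$ is exactly what makes that recursion work. Your translation-covariance argument is a genuinely different and cleaner route to the same conclusion: it bypasses the explicit recursion entirely by exploiting that $M[\tau_s\beta] = \tau_s M[\beta]\tau_{-s}$ and matching Fourier characters term by term in the Taylor expansion. This has the advantage of not requiring any structural information about the expansion from \cite{CGNS05} beyond its analyticity and homogeneity in $\beta$, whereas the paper's approach (and your second alternative of reading off the kernels) ties the result to the concrete form of $A[b]$, $B[b]$, and $\widetilde G[-h+b]$.
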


The proof of this lemma is by induction, first using the recursion
formula for the operator $M$, as described in
\cite[Appendix]{CGNS05}, and  then using formula 
\eqref{T-s} and \eqref{T-q} for $T$.

We conclude this section by stating the first three terms of the
expansion of $T$. For $\ell \ne \pm n$,  
\begin{equation}\label{Tq-G}
\begin{aligned}
  & [T_1,G_\theta]_{\pm n \ell}  = (M_1) _{ \pm n \ell} \\   
  &[T_2,G_\theta]_{\pm n \ell} =  (M_2 + [M_1,T_1])_{\pm n \ell} \\
  &[T_3,G_\theta]_{\pm n \ell}  = (M_3 + [M_2,T_1] + [M_1,T_2] +
  \frac{1}{3} [T_1,[T_1,M_1]] )_{\pm n \ell}~. 
\end{aligned}
\end{equation}

\subsection{Eigenvalue expansion}
Define the $2 \times 2$ matrix coefficient 
\begin{equation} \label{matrix-A}
A(\theta, \varepsilon) = P_n e^{-T}(G_\theta +M) e^T P_n.
\end{equation}
We  compute the  two eigenvalues $\Lambda_{2n-1}(\theta) $ and
$\Lambda_{2n}(\theta)$  perturbatively, and thus give conditions for
the opening of the $n^{th}$ spectral gap. The order at which  the gap
opens depends on the Fourier coefficients of $\beta(x)$. Using the
expansion of $T$, we find 
\begin{align}\label{pn-exp-pn}
  P_n  e^{-T}(G_\theta+M &)e^{T} P_n = P_nG_\theta P_n+ \varepsilon P_n M_1P_n  \nonumber \\
  + \varepsilon^2 P_n \Big(&  \frac{1}{2} [T_1,[T_1,G_\theta]] + M_2+ [M_1,T_1] \Big) P_n \\
  + \varepsilon^3 P_n \Big(& \frac{1}{2} [T_1,[T_2,G_\theta]]  +
  \frac{1}{2} [T_2,[T_1,G_\theta]]  + M_{3}  \nonumber \\ 
 &   + [M_2,T_1] + [M_1,T_2] + \frac{1}{2} [T_1,[T_1, M_1]]  \Big) P_n
  + \BigOh{\varepsilon^4}~. \nonumber 
\end{align}
From the Taylor expansion of $G[b]$ in powers of $b$ (see Appendix of \cite{CGNS05}),
we have
\begin{align*}  
   M_1 & = -e^{-i\theta x} D\sech(hD) \, \beta \, D\sech(hD) e^{i\theta x} \\
   M_2 & = -e^{-i\theta x} D\sech (hD) \, \beta \, D\tanh(hD) \,
       \beta(x) \, D \sech(hD) e^{i\theta x}  \nonumber \\
   M_3 & = e^{-i\theta x} D\sech(hD) \ \Big( -\frac{1}{6} \beta^3 D^2 +
   \frac{1}{2} \beta^2 D \beta \nonumber \\ 
 & \qquad    -\beta D \tanh(hD) 
   \beta  D\tanh(hD) \beta\Big)
   \, D \sech(hD) e^{i\theta x}  \nonumber \\
 M_4 & = -e^{-i\theta x} D\sech(hD) \Big( \frac{1}{2} \beta^2 D^2 \beta D \tanh(hD) \beta 
   -\frac{1}{6} \beta  D\tanh(hD) \beta^3 D^2 \nonumber\\
 & \qquad  +\frac{1}{2} \beta D \tanh(hD) \beta^2 D^2 \beta
   -\frac{1}{6} \beta^3 D^3 \tanh(hD) \beta \nonumber \\ 
 & \qquad -\beta  D\tanh(hD)  \beta  D\tanh(hD)  \beta  D\tanh(hD) \Big)
   \, D \sech(hD) e^{i\theta x} ~. 
\end{align*}
Recalling the notations $ g_k\theta) = ( k+\theta)  \tanh (h(k+\theta))$ and 
$ s_k(\theta) = (k+\theta)  \sech(h(k+\theta))$, we have 
\begin{equation}
A_1 :=    P_n M_1 P_n = \begin{pmatrix}
      0 & - \beta_{2n} s_n(\theta) s_n (-\theta) \\
     -\overline{\beta}_{2n} s_n(\theta) s_n(-\theta) & 0
\end{pmatrix},
\end{equation}
where $A=\sum_{p\ge 0} \varepsilon^p A_p$. Under the condition that 
$\beta_{2n} \ne 0$, the $n{^{th}} $ gap occurs for $\theta = 0$, and
the size of the gap is of order $\mathcal{O}(\varepsilon)$,
approximated by the difference of the two eigenvalues of the above
matrix. This recovers the prediction given by the $2\times 2$ model of
Section \ref{model}. Indeed, computing $A_1$ close to the value 
$\theta = 0$, the spectral gap is approximately given by 
\begin{equation} \label{app-diff}
   \Lambda_{2n}(0)-\Lambda_{2n-1}(0) \sim 2\varepsilon s_n^2(0) |\beta_{2n}|~.
\end{equation}

If $\beta_{2n}=0$, we may pass to the next term in the expansion.
Returning to eq. \eqref{pn-exp-pn}, the term of order
$\mathcal{O}(\varepsilon^2)$ reduces to
\begin{equation*}
   P_n\Big( \frac{1}{2} [T_1,[T_1,G_\theta]] +
     M_2+ [M_1,T_1] \Big) P_n.
\end{equation*}
The eigenvalues $\Lambda_{2n-1} (\theta)$ and $\Lambda_{2n}(\theta)$
are approximated at order 
$\mathcal{O}(\varepsilon^2)$
by the eigenvalues of the matrix
\begin{align*}
\sum_{k=0}^2 \varepsilon^k A_k=
\begin{pmatrix}
  g_n(\theta) + \varepsilon^2 a_{n,n} & \varepsilon^2 a_{-n,n} \\
   \varepsilon^2 \overline{a}_{-n,n} & g_{-n}(\theta) + \varepsilon^2 a_{-n,-n}\\
\end{pmatrix}
\end{align*}
where for $k=  \pm n, j = \pm n $, the matrix coefficients of $A_2$ are
\[
   a_{j,k}(\theta) = \left(M_2\right)_{jk} + \frac{1}{2} 
    \sum_{\ell \neq  \pm n } \left(M_1\right)_{j \ell}\left(T_1\right)_{\ell k} 
   - \left(T_1\right)_{j \ell}\left(M_1\right)_{\ell k} ~.
\]
From \eqref{pn-exp-pn}
\begin{align*}
  \left(M_1\right)_{jk}(\theta) & = -s_j(\theta)s_{k}(\theta) \beta_{j-k} \\
   \left(M_2\right)_{j k}(\theta) & = -\sum_{p \in \Z }
      s_j(\theta) \beta_{j-k-p} g_{k + p}(\theta) \beta_{p} s_k(\theta) 
   \\ 
    \left(T_1\right)_{(\pm n) \ell}(\theta) & = 
     \frac{\left(M_1\right)_{\pm n, \ell}(\theta)}{g_{\ell}(\theta) - g_{\pm n} (\theta)} 
     = \frac{s_{\pm n}(\theta)s_{\ell}(\theta) \beta_{\pm n -\ell}}{g_{\pm n }(\theta)
       - g_{\ell}(\theta)} ~. 
\end{align*}
Explicitly,
\begin{align*}
   a_{n,n} & = - s_n^2(\theta)  \Big(\sum_k |\beta_k|^2 g_{k+n}(\theta) 
      - \sum_{l\ne \pm n} \frac{ s_l^2(\theta)
        |\beta_{n-l}|^2}{g_n(\theta)- g_l(\theta)} \Big) ~, \nonumber \\
   a_{-n,-n} & = -s_{-n}^2(\theta) \Big( \sum_k |\beta_k|^2
   g_{k-n}(\theta) - \sum_{l\ne \pm n} 
      \frac{ s_l^2(\theta) |\beta_{n+l}|^2}{g_{-n}(\theta) -
        g_l(\theta)}  \Big) ~, \nonumber \\
   a_{n,-n} & =  -s_{-n}(\theta) s_n(\theta)  \sum_{k}  g_{-k}(\theta)\beta_{n+k} 
   {\beta}_{n-k} \nonumber \\
   & \quad + \frac{1}{2} s_n(\theta) s_{-n}(\theta) 
     \Big( \sum_{l\ne \pm n} {\beta}_{n-l}  \beta_{l+n} s_l^2(\theta) 
     (\frac{1}{g_{n}(\theta) - g_l(\theta)} 
    + \frac{1}{g_{-n}(\theta) - g_l(\theta)} )\Big) ~,  \nonumber \\
   a_{-n,n} & = \overline{a_{n,-n}} ~. 
\end{align*}

In a neighborhood of $\theta=0$, one has 
$g_n(\theta) - g_{-n}(\theta) = \BigOh{\theta}$ 
and $s_n(\theta) + s_{-n}(\theta) = \BigOh{\theta}$, 
so that the terms on the diagonal of $A(\theta)$ satisfy 
$A_{n,n}(\theta) - A_{-n,-n}(\theta) = \BigOh{\theta}$.
Thus, at $\theta=0$, using that $A_1=0$, we find that 
\begin{align*}
   \Lambda_{2n}(0)-\Lambda_{2n-1}(0) & = 2 \varepsilon^2 |a_{n,-n} | + \BigOh{\varepsilon^3}
\end{align*}
where
\begin{align} \label{an--n}
  a_{n,-n} (0)& 
    = -s_n(0)^2 \sum_{l\ne \pm n } \beta_{n-l}\beta_{n+l} ( -g_l(0) +
    \frac{s_l^2(0) }{g_n(0)-g_l(0)}) 
    \\ \nonumber
    &=-s_n(0)^2 \sum_{l\ne \pm n } \beta_{n-l}\beta_{n+l}
      \frac{l^2 -g_n(0)g_l(0)}{g_n(0)-g_l(0)} ~.
\end{align}
As long as the function $\beta(x)$ describing the bathymetry has
nonzero Fourier coefficients $\beta_l$ such that for some 
$l \not= \pm n$ both $\beta_{l\pm n} \not= 0$, then this quantity has
the possibility to be nonvanishing. In any case it is generically
nonzero. In this situation, the above expression gives a description of
the asymptotic size of the $n^{th}$ gap opening at order
$\mathcal{O}(\varepsilon^2)$.

\begin{proposition}\label{Prop:GapOpening}
The matrix coefficients of $A(\theta, \varepsilon)$ defined in \eqref{matrix-A} satisfy 

(i)  $A_{n,n}(0,\varepsilon) = A_{-n,-n} (0,\varepsilon) $ and it is real.

(ii) $A_{n,-n}(0,\varepsilon) = \overline{A}_{-n,n} (0,\varepsilon) $
due to the Hermitian character. A necessary condition for a gap to
open under perturbation in $\varepsilon$ is that $A_{n,-n} \ne 0$. 

(iii) Let $\mathcal{K} = \{ k_j \in \Z, \  \beta_{k_j}\ne0\} :=
\mathcal{K}(\beta)$, the set of Fourier indices corresponding to
nonzero  coefficients of $\beta(x)$. Suppose that, for all $p\le q$,
and for every sum of integers $\sum_{j=1}^p k_j =n$,  one of the $k_j$
does not belong to $\mathcal{K}$.  Then there is an upper bound of the
$n^{th}$ gap opening  
\[
   |\Lambda_n^- -\Lambda_{n-1}^+ | \le C \varepsilon^{q+1} ~.
\]

(iv) $A(0,\varepsilon) $ is analytic in $\varepsilon$. Thus, it is
determined by its Taylor series in $\varepsilon$. If 
$ \frac{\partial^q}{\partial \varepsilon^q} A_{n,-n} (0,0) =0 $ for
all $q$, then $A_{n,-n}(0,\varepsilon) =0$ for all $\varepsilon$. In
this case, the eigenvalues of $A(0,\varepsilon)$ are double, that is
the $n^{th}$ gap remains closed. 
\end{proposition}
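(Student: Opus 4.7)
The plan is to establish (i)--(iv) in order: (i) and (ii) determine the symmetry class of the $2\times 2$ matrix $A(0,\varepsilon)$, while (iii)--(iv) extract quantitative and qualitative gap behavior from the combinatorial structure of the expansion in Lemma \ref{lemma4-2} and the analyticity furnished by Theorem \ref{theor-IFT-2}.

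For (i), I would exploit the reality symmetry $K:f\mapsto \overline f$. Because $\beta$ is real and the Laplace problem \eqref{eq2.5} has real coefficients, $G[b]$ commutes with $K$; at $\theta=0$ the twist $e^{\pm i\theta x}$ is trivial, so $K(G_\theta+M)K=G_\theta+M$. The uniqueness clause of Theorem \ref{theor-IFT-2} then forces the implicitly defined $T$ to satisfy $KTK=T$ at $\theta=0$: apply $K$ to $F(T,M)=0$, note that $KTK\in\mathcal{P}_r$ solves the same equation, and invoke uniqueness. Consequently $A$ inherits the matrix-element identity $A_{jk}(0,\varepsilon)=\overline{A_{(-j)(-k)}(0,\varepsilon)}$, which at $(j,k)=(n,n)$, combined with Hermiticity, yields $A_{n,n}(0,\varepsilon)=A_{-n,-n}(0,\varepsilon)\in\R$. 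Part (ii) is then immediate: $A$ is Hermitian (since $G_\theta+M$ is self-adjoint by Proposition \ref{prop1} and $e^T$ is unitary), and a $2\times 2$ Hermitian matrix $\begin{pmatrix} a & c \\ \overline c & a\end{pmatrix}$ has eigenvalues $a\pm|c|$; the splitting equals $2|c|$ and vanishes exactly when $c=0$.

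For (iii), the engine is Lemma \ref{lemma4-2}: $(M_p)_{jl}$ is a polynomial of degree $p$ in $\beta_{k_1},\dots,\beta_{k_p}$ supported on tuples with $k_1+\cdots+k_p=j-l$, and the recursion \eqref{T-q} propagates the same linear constraint to $(T_p)_{jl}$. Inserting these into the expansion \eqref{pn-exp-pn}, compositions of $G_\theta$, $M_{p}$ and $T_{p}$ preserve the index-difference bookkeeping, so $(A_p)_{n,-n}$ is a polynomial in $\{\beta_k\}_{k\in\Z}$ whose monomials correspond to tuples $(k_1,\dots,k_s)$, $s\le p$, summing to the off-diagonal index of the $(n,-n)$ block. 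When the support hypothesis of (iii) prevents any such tuple from having all entries in $\mathcal{K}(\beta)$, we obtain $(A_p)_{n,-n}=0$ for every $p\le q$, hence $A_{n,-n}(0,\varepsilon)=\BigOh{\varepsilon^{q+1}}$. Combined with (ii) and the reality symmetry $\Lambda_j(\theta)=\Lambda_j(-\theta)$ (which places the band extrema at $\theta=0$), this yields the bound $|\Lambda_n^--\Lambda_{n-1}^+|\le C\varepsilon^{q+1}$.

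Part (iv) follows from two analyticity facts: $b\mapsto M(b)$ is analytic via the Taylor expansion of $G[b]$ recalled from \cite{CGNS05}, and $M\mapsto T(M)$ is analytic by Theorem \ref{theor-IFT-2}. Composition together with the holomorphic functional calculus applied to the bounded operator $T$ then give analyticity of $\varepsilon\mapsto A(0,\varepsilon)$. If every Taylor coefficient of $A_{n,-n}(0,\cdot)$ at $\varepsilon=0$ vanishes, analyticity forces $A_{n,-n}(0,\varepsilon)\equiv 0$; by (i), $A(0,\varepsilon)$ is then a scalar multiple of the identity on the two-dimensional block, both eigenvalues coincide, and the gap stays closed. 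The main obstacle I anticipate is in (i), namely the rigorous verification that the implicitly constructed $T$ respects the reality symmetry at $\theta=0$; once this uniqueness-based symmetry argument is secured, parts (ii)--(iv) amount to careful bookkeeping with the expansion \eqref{pn-exp-pn} and invocation of the analyticity already in hand.
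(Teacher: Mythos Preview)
Your proposal is correct and, for parts (iii) and (iv), proceeds exactly as the paper does: Lemma~\ref{lemma4-2} forces the Fourier-index constraint $k_1+\cdots+k_p=2n$ on the monomials in $(A_p)_{n,-n}$, so the support hypothesis kills these coefficients for $p\le q$; and analyticity of $A(0,\varepsilon)$ follows from Theorem~\ref{theor-IFT-2} together with the analyticity of $M$ in $b$. For parts (i) and (ii) the paper gives essentially no argument, so your complex-conjugation symmetry $K$ together with the uniqueness clause of the implicit function theorem (forcing $KTK=T$ at $\theta=0$, hence $A_{jk}=\overline{A_{-j,-k}}$) is a genuine and welcome addition rather than a departure; it is the natural mechanism behind the stated identity, and your verification that $KTK\in\mathcal{P}_r$ with $\|KTK\|_r=\|T\|_r$ is what makes the uniqueness step legitimate.

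Two small remarks. First, in your discussion of (iii) you write ``tuples $(k_1,\dots,k_s)$, $s\le p$'': by Lemma~\ref{lemma4-2} the degree is exactly $p$, not at most $p$, though this does not affect the conclusion since the hypothesis ranges over all $p\le q$. Second, the passage from ``splitting of $A(0,\varepsilon)$ is $\BigOh{\varepsilon^{q+1}}$'' to the bound on $|\Lambda_n^--\Lambda_{n-1}^+|$ implicitly uses that the band extrema occur at $\theta=0$ (respectively $\pm\tfrac12$); the paper treats this as understood in the perturbative regime, and your appeal to the evenness $\Lambda_j(\theta)=\Lambda_j(-\theta)$ is the right ingredient, but be aware that neither you nor the paper proves this localization of the extrema in full generality.
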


\begin{proof}
We study the case of an even numbered gap. The odd numbered gaps are similar. 
$\frac{\partial^p}{\partial \varepsilon^p} A_{n,-n}(0,0)$ is the
$p^{th}$ Taylor coefficient of the matrix coefficient of $A$. It
satisfies the conclusion of Lemma \ref{lemma4-2}. It is a polynomial
of order $p$ in the Fourier coefficients $\beta_k$ : 
\[
   (A_p)_{n,-n} = \sum a_p(k_1,....,k_p) \beta_{k_1}....\beta_{k_p}
\]
where $k_1+ \cdots+k_p= 2n$. Under the hypothesis (iii), if $p\le q$, then $(A_p)_{n,-n}=0$.

The proof of (iv) follows from Theorem \ref{theor-IFT} and the
analyticity properties of $M$ with respect to $b$. 
\end{proof}

\subsection{Examples}
When  $b(x) =\varepsilon \cos (x)$, the first gap occurs for 
$\theta = \pm 1/2$ and is of order $ \BigOh{\varepsilon}$ indeed by \eqref{app-diff}. 
\[
    \Lambda_1^-  -\Lambda_0^+ = g_0(\frac{1}{2})  2 | s_0(\frac{1}{2})|^2 |\beta_1|
    \varepsilon  = \frac{1}{4} \sech^2(\frac{h}{2}) \varepsilon   
     ~.  
\]
We are also able to calculate analytically the deviation of the centre.
 We find that 
\begin{equation}
 \frac{1}{2} (\Lambda_1^-   +\Lambda_0^+) = -\ \varepsilon^2
 \frac{s_0^2(\frac{1}{2})  ( g_0^2(\frac{1}{2}) - \frac{9}{4})  }{4
   (g_0(\frac{1}{2}) -g_1(\frac{1}{2}))} 
 +  \BigOh{\varepsilon^3}.
 \end{equation}
 It is straightforward to check that this quantity is negative,
 showing that the centre of the gap is strictly decreasing with
 $\varepsilon$. 
Hence for increasing $\varepsilon$ the gap centre is transposed, or
downshifted, to lower frequency. This is an analytical verification of
Figure 2b of reference \cite{YH12}. 

The second gap occurs at $\theta=0$. We find that for $n=1$ the coefficient
$a_{n,-n}$ of \eqref{an--n} vanishes. We have remarked above that this
is in contrast with the case of Matthieu's equation.
 Calculating  the expansion further and 
 using \eqref{Tq-G} 
 we find that 
\[ 
   A_{3} = P_n \Big( \frac12[M_{2},T_{1}]+\frac12[M_{1},T_{2}]+M_{3}\Big) P_{n}
\]
and 
that $A_3(0)$ is a multiple of the identity, and in particular the
off-diagonal terms satisfy $(A_3)_{1,-1} = 0$, thus not contributing
to the formation of a spectral gap.  Continuing the perturbation calculation,
\begin{align*}
 A_4 = 
 P_n \Big(& M_4-  [T_1,M_3] - [T_2,M_2] +\frac12 [T_{1},[T_{1},M_{2}]]
- [T_3,M_1] \\
& - \frac16 [T_1,[T_1,[T_1,M_1] ] ] + \frac12 [T_{1},[T_{2},M_{1}]] +
\frac12  [T_{2},[T_{1},M_{1}]] \\ 
&+\frac1{24}[T_{1}, [T_1,[T_1,[T_1,G_{\theta}] ] ] ] \\
&+ \frac12 [T_{2},[T_{2},G_{\theta}]] + \frac12
         [T_{1},[T_{3},G_{\theta}]]+ \frac12
         [T_{3},[T_{1},G_{\theta}]]  \Big) P_n
\end{align*}
which can be simplified using \eqref{Tq-G} to
\begin{equation*}
   A_4 = 
   P_n \Big( M_4-\frac{1}{2} [T_1,M_3] -\frac{1}{2} [T_2,M_2]
    - \frac{1}{2} [T_3,M_1] +\frac{1}{24} [T_1,[T_1,[T_1,M_1] ] ] 
   \Big) P_n.
\end{equation*}
where for $\theta=0$,
\begin{align*}
&(M_4)_{1,-1} = -\frac{1}{3} s_1(0) s_{-1}(0) g_2(0) \beta_1^3
  \beta_{-1} = \frac{1}{48} s_1(0)^2 g_2(0) \nonumber \\ 
&([T_1,M_3])_{1,-1} = (T_1)_{1,2} (M_3)_{2,-1} - (M_3)_{1,-2} (T_1)_{-2,-1} \nonumber \\
& \qquad = -\frac{1}{48} s_2^2(0) s_1(0)^2\frac{1}{g_2(0) -g_1(0)} \nonumber \\
&([T_2,M_2])_{1,-1} = (T_2)_{1,-3} (M_2)_{-3,-1} - (M_2)_{1,3})T_2)_{3,-1} =0\nonumber \\
&([T_3,M_1])_{1,-1} = (T_3)_{1,-2} (M_1)_{-2,-1} - (M_1)_{1,2} (T_3)_{2,-1} \nonumber \\
& \qquad =\frac{1}{48} s_1(0)^2 s_2^2(0)  \frac{1}{g_2(0) -g_1(0)}  \nonumber\\
&([T_1,[T_1,[T_1,M_1]]])_{1,-1}=0 ~.
\end{align*}
Using this lengthy but straightforward calculation, the off-diagonal
entries $(A_4)_{1,-1} = (\overline{A_4})_{-1,1}$ of the matrix $A_4$,
are given by   
\[
   (A_4)_{1,-1} = \frac{1}{48} s_1^2(0) g_2(0) = \frac{1}{24} \sech^2(h) \tanh (2h)
\]
which are nonvanishing. We thus have 
\begin{equation}\label{Eqn:SecondGap-Matthieu}
   \Lambda_2^- - \Lambda_1^+ = 2 \varepsilon^4 |(A_4)_{1,-1}| ~,
\end{equation}
establishing a gap opening of order $\BigOh{\varepsilon^4}$.
In general the $n^{th}$ gap satisfies 
\[
  \Lambda_n^-  -\Lambda_{n-1}^+ \le C(n)  \varepsilon^n ~,
\]
which follows from Proposition~\ref{Prop:GapOpening}. 

On the other hand, when $\beta(x) = \cos(x) +  \cos(3x)$, the second gap
opens at order $O(\varepsilon^2)$, indeed in expression
\eqref{an--n}, there is a non-zero term in the sum, which corresponds
to $l=\pm 2 :$ 
\begin{align*}
    \Lambda_2^-  -\Lambda_1^+ &=  \varepsilon^2 s_1^2(0)
    \Big| -g_2(0) +\frac{s_2^2(0)}{g_1(0)-g_2(0)} \Big| \nonumber \\
    &= \varepsilon^2 \sech^2 (h) \frac{4-2 \tanh(h) \tanh(2h)}{\tanh(h) - 2 \tanh(2h)}.
\end{align*}
In general, unlike the case of the Matthieu operator, the $n^{th}$ gap
does not necessarily open at order $\varepsilon^n$ due to the
combinatorics of the perturbation analysis of the spectrum of the
Dirichlet -- Neumann operator.

When $\beta(x)= \cos(2x)$, the upper bound on the $n^{th}$ gap for odd $n$ in
criterion (iii) of Proposition \ref{Prop:GapOpening} is satisfied for
all $q$. Thus (iv) applies and the odd gaps never open. 
 

\section{Proof of Proposition \ref{prop2}}


The goal of this section is to prove Proposition \ref{prop2} that
shows the smoothness property of the operator $M$. 
Returning to the definition of $M= e^{-i\theta x} DL[b] e^{i\theta x}$
where $DL$ is given in \eqref{operatorDL}-\eqref{operatorsAB} 
we write
\[ 
   M = e^{-i\theta x} D B[b] e^{i\theta x}e^{-i\theta x}  A[b]
   e^{i\theta x} ~.
\]
For $f \in L^2(\T^1)$, the action of the operator $A[b]$ given in
\eqref{operatorsAB} on $e^{i\theta x} f$ is 
\begin{align*}
A[b] ( e^{i\theta x} f) =\int_{\R} e^{ikx} \sinh(b(x) k) \sech(hk)  \hat{f}(k-\theta) dk  ~. 
\end{align*}
Using the periodicity of $f(x)$ and $b(x)$,  we can write integral kernels as a sum:
\begin{align*}
   A[b] (e^{i\theta x} f)&= \sum_k \Big(  e^{i(k+\theta)x} \sinh(b(x)
     (k+\theta)) \sech(h(k+\theta))\hat{f}_k \Big) \\
   & = \frac{1}{2\pi} \int_0^{2\pi} f(x') e^{i\theta  x'}  \sum_k
       \sinh(b(x) (k+\theta)) \sech(h(k+\theta)) e^{i(k+\theta)
         (x-x')}  dx' \\ 
   & = \frac{1}{2\pi} \int_0^{2\pi} f(x') e^{i\theta x'}  K(x,x',\theta) dx'
\end{align*}
where the kernel 
$K(x,x',\theta) = \sum_k \sinh(b(x) (k+\theta)) \sech(h(k+\theta)) e^{i(k+\theta) (x-x')}$
has the property that 
\[
  | K(x,x',k)| \le C \sum_k e^{-|k+\theta| (h-|b(x)|)} ~, 
\]
with the rhs being  a convergent series as soon as $|b(x)|$ remains
always strictly smaller than the depth $h$. The operator $A[b]$ satisfies
the estimate 
\begin{equation}
   \|e^{-i\theta x} A[b] e^{i\theta x} f\|_{L^2(\T^1)} \le C(|b|_{C^0}) \|f\|_{L^2(\T^1)}
\end{equation}
as well as a stronger form of it
\begin{equation}
   \|e^{-i\theta x} A[b] e^{i\theta x} f\|_{L^2(\T^1)} \le C(|b|_{C^0}) \|f\|_{H^{-s}(\T^1)} ~.
\end{equation}

We now examine the operator $DB[b] $ acting on $\theta$-periodic functions. 
\begin{align*}
\partial_x B[b] (e^{i\theta x} f) &= -\frac{1}{\pi}  \int_\R \frac{\partial_{x'} b(x')}
{ (x-x')^2 + (b(x')-h)^2} e^{i\theta x'}  f(x') dx' \cr
&+ \frac{2}{\pi}
\int_\R \frac{ \partial_{x'} b(x')  (x'-x)^2}
{ [(x-x')^2 + (b(x')-h)^2]^2} e^{i\theta x'}  f(x') dx' \cr
& -   \frac{1}{\pi} \int_\R \frac{x-x'}{(x-x')^2 + (h-b(x'))^2)}
\widetilde{G}[-h+b] e^{i\theta x'}  f(x') dx'. 
\end{align*}
It has three terms that we denote $I_1, I_2,I_3$ respectively.

Using the periodicity of $f$ and $b$, we can replace integrals over
$\R$ by sums of integrals over $(0,2\pi)$:  
\begin{equation}
   I_1 = -\frac{1}{\pi} \int_0^{2\pi} f(x') \partial_{x'} b(x')
   e^{i\theta x'} K_1( x,x',\theta) dx' ,
\end{equation}
where 
\[
   K_1(x,x', \theta) = \sum_n \frac{e^{2\pi n i\theta}}{(x-x'-2\pi n)^2
     + (b(x') - h)^2} 
\]
satisfies, using that $|b(x)-h| > \alpha$, 
\[
   |K_1(x,x', \theta)|\le \sum_n \frac{1}{ (x-x'-2\pi n)^2 +\alpha^2} ~.
\]
The kernel $K$ is bounded by a convergent series, thus it is bounded
and it is also in $L^1$. Thus 
\begin{equation}
   \|e^{-i\theta x} I_1\|_{L^2(\T^1)}  \le  C(|b|_{C^1}) \| f \|_{L^2(\T^1)} ~.
\end{equation} 
Also, if we take derivatives of $B[b]$, they will not act on $b$,  and the
resulting terms in the integrals will decrease faster. Thus we also
have the smoothing property
\begin{equation}
   \|e^{-i\theta x} I_1\|_{H^p(\T^1)}  \le  C(|b|_{C^1}) \| f \|_{L^2(\T^1)} 
\end{equation} 
for all $p>0$.
The estimate for term $I_2$ is similar. For term $I_3$, we use that
the Dirichlet -- Neumann $\widetilde{G}[-h+b]= D \widetilde{H}[-h+b]$
where  $\widetilde{H}$ is the Hilbert transform associated to the
spatial domain $ -h+b(x) < y< 0$, described in  \cite{CSS92}. By
integration by parts, we have  
\begin{align*}
   I_3 = \frac{1}{\pi} \int_\R  \partial_{x'}
   \Big(\frac{x-x'}{(x-x')^2 + (h-b(x'))^2 }\Big) i
   \widetilde{H}[-h+b]  (e^{i\theta x'}  f(x')) dx' ~. 
\end{align*} 
The estimates are now similar to those of terms $I_1$ or $I_2$, using
the fact that $\widetilde{H}$ is a  bounded operator from $L^2$ to
$L^2$. 

\bigskip
\noindent
{\bf Acknowledgements:} We would like to thank Philippe Guyenne for
useful discussions which helped us in preparing this manuscript.

\end{document}